\theoremstyle{plain}
\newtheorem{theorem}{Theorem}[section]
\newtheorem{corollary}[theorem]{Corollary}
\newtheorem{lemma}[theorem]{Lemma}
\newtheorem{proposition}[theorem]{Proposition}
\theoremstyle{definition}
\newtheorem{remark}[theorem]{Remark}
\newcommand{\bt}{\begin{theorem}}
\newcommand{\et}{\end{theorem}}	
\newcommand{\bp}{\begin{proof}}
	\newcommand{\ep}{\end{proof}}
\newcommand{\D}{\mathrm{d}}
\newcommand{\e}{\mathrm{e}}
\newcommand{\V}{\mathcal{V}}
\newcommand {\I} {\ensuremath{\mathbb{I}}}
\newcommand {\R} {\ensuremath{\mathbb{R}}}
\newcommand {\Q} {\ensuremath{\mathbb{Q}}}
\newcommand {\ZZ} {\ensuremath{\mathbb{Z}}}
\newcommand {\N} {\ensuremath{\mathbb{N}}}
\newcommand{\process}[1]{\{#1_t\}_{t\geq0}}
\newcommand{\be}{\begin{equation}}
\newcommand{\ee}{\end{equation}}
\newcommand{\ben}{\begin{equation*}}
\newcommand{\een}{\end{equation*}}
\newcommand{\ba}{\@ifstar{\@bas}{\@ba}}
\def\@ba#1\ea{\begin{align}#1\end{align}}
\newcommand{\ban}{\@ifstar{\@bans}{\@ban}}
\def\@ban#1\ean{\begin{align*}#1\end{align*}}
\numberwithin{equation}{section}
\newcommand{\Prob}{\mathbb{P}}
\newcommand{\Exp}{{\mathbb{E}}}
\begin{document}
\allowdisplaybreaks[4]

\title{Stability of the overdamped Langevin equation in double-well potential}

\author[Nikola\ Sandri\'{c}]{Nikola Sandri\'{c}}
\address[Nikola\ Sandri\'{c}]{Department of Mathematics\\University of Zagreb\\10000 Zagreb\\Croatia}
\email{nsandric@math.hr}

\subjclass[2010]{60G17, 60J60}
\keywords{diffusion process, Langevin equation, double-well potential, stability}

\begin{abstract} In this article, we discuss stability of the one-dimensional overdamped Lange\-vin equation in double-well potential. We determine unstable and stable equilibria, and discuss the rate of convergence to stable ones. Also, we derive conditions for stability of general diffusion processes which generalize the classical and well-known results of Khasminskii (\cite{Khasminskii-Book-2012}).
   
\end{abstract}

\maketitle

%
%
%
%


\section{Introduction}

The \textit{Langevin equation} is a stochastic differential equation  describing the dynamics of a particle immersed  in a fluid, subjected to an external potential force field and   collisions with the molecules of the fluid:
\be
\begin{aligned}\label{eq:langevin_undamped} 	
	m\,\D X_t&=P_t\D t\\
 \D P_t&=-(\uplambda/m) P_t\D t-\nabla V(X_t)\D t+\upsigma(X_t)\D B_t,\qquad (X_0,P_0)\in\R^d\times\R^d.
\end{aligned} 
\ee Here, $\process{X}$ and $\process{P}$ denote, respectively,   the position and momentum of the particle, $m$ is particle's mass,  $-(\uplambda/m) P_t\D t$, $\uplambda>0$, is the velocity-proportional damping (friction) force, $V$ is particle's potential and  $\upsigma(X_t)\D B_t$ is the  noise term
representing the effect of the collisions with the molecules of the fluid, where $\process{B}$ denotes a standard $d$-dimensional Brownian motion. Observe that here we assume  the measure of the noise strength $\upsigma$ is non-constant,  meaning that the effect of collisions depends on the position of the particle (e.g. due to heterogeneity of the fluid). In this case, the function $\upsigma$ models the nature of the position-dependence.

In the case when the inertia of the particle is negligible in comparison with the damping  force (due to friction), the trajectory 
 of the particle is described by the so-called \textit{overdamped Langevin equation}:
\be\label{eq:langevin_overdamped}\uplambda\,\D X_t=-\nabla V(X_t)\D t+\upsigma(X_t)\D B_t,\qquad X_0\in\R^d.\ee
 Namely, in \cite[Chapter 10]{Nelson-Book-1967}  it has been shown that (under certain assumptions on the potential $V$ and diffusion coefficient $\upsigma$)   the solution to \eqref{eq:langevin_undamped} converges a.s. to the solution to \eqref{eq:langevin_overdamped}, as   $m\searrow0$.

 The main purpose of this article is to discuss stability of the solution to the one-dimensional overdamped Langevin equation in \textit{double-well} or \textit{Landau potential} $V(x)=-ax^2/2+bx^4/4$, $a,b>0$: \be\label{eq:langevin}\uplambda\, \D X_t=(-bX_t^3+aX_t)\D t+\upsigma(X_t)\D B_t,\qquad X_0\in\R.\ee

This potential is  of considerable interest in quantum mechanics and quantum field theory for the exploration of various physical phenomena or mathematical properties since it permits in many cases explicit calculation without over-simplification (see e.g. \cite{Coleman-Book-1979} and  \cite{Liang-Muller-Kirsten-1992}). Typical example where it occurs is in the so-called \textit{ammonia inversion phenomenon}. This is a switching of the nitrogen atom from above to below the hydrogen plane. More precisely, the ammonia molecule is pyramidal shaped with the three hydrogen atoms forming the base and the nitrogen atom at the top. The nitrogen atom sees a double-well potential with one well on either side of the hydrogen plane. Because the potential barrier is finite, it is possible for the nitrogen atom to tunnel through the plane of the hydrogen atoms, thus ``inverting'' the molecule (see \cite{Lehn-Book-1970} for more details).

 For the sake of simplicity, but without loss of generality, in the sequel we assume $a=b=\uplambda=1.$ Also, we impose the following assumptions on the diffusion coefficient $\upsigma$:
 
 	\medskip
 
 \begin{description}
 	\item[A1] $\upsigma$ is locally Lipschitz continuous;
 	
 	\medskip
 	
 	\item[A2] $\displaystyle\limsup_{|x|\nearrow\infty}|\upsigma(x)|/|x|^2<\sqrt{2}.$
 \end{description} 
 
 	\medskip
 	
 Under (\textbf{A1}) and (\textbf{A2}), in  
 \cite[Theorem 3.1 and  Proposition 4.2]{Albeverio-Brzezniak-Wu-2010} and \cite[Theorem 3.1.1]{Prevot-Rockner-Book-2007}
  it has been shown that the equation in \eqref{eq:langevin} admits a unique non-explosive strong solution $\process{X}$ which, in addition, is a temporally homogeneous strong Markov process with continuous sample paths.  Furthermore, in \cite[Remark 2.2 and Proposition 4.3]{Albeverio-Brzezniak-Wu-2010} it has been also shown that  $\process{X}$ is a $\mathcal{C}_b$-Feller process and that for any $f\in \mathcal{C}^2(\R)$ the process \be\label{eq:martingale}M_t^f:=f(X_t)-f(X_0)-\int_{0}^t\mathcal{L}f(X_s)\D s,\qquad t\ge0,\ee is a local martingale, where \be\label{eq:operator}\mathcal{L}f(x)=(-x^3+x)f'(x)+\frac{\upsigma^2(x)}{2}f''(x),\qquad f\in \mathcal{C}^2(\R).\ee Recall, \textit{$\mathcal{C}_b$-Feller property} means that the semigroup $\process{P}$ of $\process{X}$, defined as  \ben P_tf(x):=\int_{\R}f(y)p^t(x,\D y),\qquad t\ge0,\ x\in\R,\ f\in \mathcal{B}_b(\R),\een maps $\mathcal{C}_b(\R):=\mathcal{C}(\R)\cap \mathcal{B}_b(\R)$ to $\mathcal{C}_b(\R)$. Here, $p^t(x,\D y)$ and $\mathcal{B}_b(\R)$ denote, respectively, the transition kernel of  $\process{X}$ and the space of bounded Borel measurable functions.

 \subsection{Stability of the deterministic overdamped Langevin equation \eqref{eq:langevin}}

 We consider \be\label{eq:deterministic}\dot{x}=-x^3+x,\qquad x(0)\in\R.\ee It is easy to check that \eqref{eq:deterministic} admits three solutions: $x_1(t)\equiv0$ (corresponding to the initial condition  $x_1(0)=0$), \ben x_2(t)=-\frac{\e^{t}}{\sqrt{\e^{2t}-1+1/x_2(0)^2}},\qquad x_2(0)<0,\een and  
 \ben x_3(t)=\frac{\e^{t}}{\sqrt{\e^{2t}-1+1/x_3(0)^2}},\qquad x_3(0)>0.\een 
 
 Now, recall that $x_e\in\R$ is called \textit{equilibrium state} to the Cauchy problem \be\label{eq:cauchy}\dot{x}=f(x),\qquad x(0)\in\R,\ee if $x(0)=x_e$ implies that $x(t)\equiv x_e$ (or, equivalently, if $f(x_e)=0$). Clearly, the only equilibria to \eqref{eq:deterministic} are $-1$, $0$ and $1$.  Further, an equilibrium $x_e$ to \eqref{eq:cauchy} is called \textit{stable} if for every $\upvarepsilon>0$ there is $\updelta>0$ such that $|x(0)-x_e|<\updelta$ implies  $|x(t)-x_e|<\upvarepsilon$ for all $t\ge0$; otherwise it is called \textit{unstable}. An equilibrium $x_e$ to \eqref{eq:cauchy} is called \textit{asymptotically stable} if it is stable, and if there is $\updelta>0$ such that whenever $|x(0)-x_e|<\updelta$ then \ben\lim_{t\nearrow\infty}|x(t)-x_e|=0,\een and it is called \textit{exponentially stable} if it is  stable, and if there is $\updelta>0$ such that whenever $|x(0)-x_e|<\updelta$ then \ben\lim_{t\nearrow\infty}\e^{\upkappa t}|x(t)-x_e|=0\een for some $\upkappa>0.$
 Clearly, $-1$ and $1$ are exponentially stable for any $0<\kappa<2$, and $0$ is unstable equilibrium to \eqref{eq:deterministic}.  
 
 Previous discussion suggests that in the non-deterministic setting the states $-1$, $0$ and $1$ might also play an important role. However, in this setting, due to the random term $\upsigma(X_t)\D B_t$ which can ``regularize'' the equation, these points will not necessarily be equilibria of  $\process{X}$:  if $\upsigma$ is ``regular'' enough, i.e. if $\upsigma$ does not vanish at $-1$, $0$ and $1$, $\process{X}$ will admit only one equilibrium  which does not explicitly depend on $-1$, $0$ and $1$.

  \subsection{Stability of the overdamped Langevin equation \eqref{eq:langevin}}
 In the non-deterministic setting the role of equilibria take invariant measures of the underlying process.  A probability measure $\uppi$ on $\R$ is \textit{invariant} for $\process{X}$ if \ben \int_{\R}p^t(x,\D y)\uppi(\D x)=\uppi(\D y),\qquad t\ge0.\een In other words, under $\uppi$ as an initial distribution the marginals of $\process{X}$ do not change over time, i.e. $\process{X}$ is a stationary process.
 
 As the first main result of this article we show that  $\process{X}$ admits at least one equilibrium (invariant measure). 
 
  \begin{theorem}\label{tmy} 
  	Assume (\textbf{A1}) and (\textbf{A2}).	Then  $\process{X}$ admits an invariant measure.
  \end{theorem}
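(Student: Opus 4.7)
The plan is to establish existence of an invariant measure via the classical Krylov--Bogoliubov approach. Since $\process{X}$ is $\mathcal{C}_b$-Feller, it suffices to fix an arbitrary starting point $x_0\in\R$ and prove that the family of empirical occupation measures $\mu_T(\cdot) := T^{-1}\int_0^T p^s(x_0,\cdot)\,\D s$, $T>0$, is tight on $\R$; any weak subsequential limit is then an invariant probability measure for the semigroup $\process{P}$.

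Tightness will be produced by a Lyapunov estimate with $V(x)=x^2$. A direct computation using \eqref{eq:operator} yields
\ben
\mathcal{L}V(x) = 2x(-x^3+x) + \upsigma^2(x) = -2x^4 + 2x^2 + \upsigma^2(x).
\een
By assumption (\textbf{A2}) there exist $\theta\in(0,2)$ and $R_0>0$ with $\upsigma^2(x)\le \theta x^4$ whenever $|x|\ge R_0$, and hence one can choose constants $c>0$ and $K<\infty$ such that $\mathcal{L}V(x) \le K - c\,x^2$ for every $x\in\R$. This one-sided dissipativity bound is precisely what (\textbf{A2}) buys: the drift's contribution $-2x^4$ must strictly dominate the diffusion's contribution $\upsigma^2$ for the Lyapunov argument to close.

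Since \eqref{eq:martingale} only supplies a local martingale, I would localize with $\tau_n := \inf\{t\ge 0 : |X_t|\ge n\}$, which satisfies $\tau_n\nearrow\infty$ a.s. by non-explosion. Optional sampling combined with the Lyapunov bound and $V\ge 0$ gives
\ben
c\,\Exp_{x_0}\!\left[\int_0^{t\wedge\tau_n}X_s^2\,\D s\right] \le V(x_0) + Kt - \Exp_{x_0}[V(X_{t\wedge\tau_n})] \le V(x_0) + Kt.
\een
Monotone convergence as $n\to\infty$ transfers this to $\int_0^t \Exp_{x_0}[X_s^2]\,\D s\le (V(x_0)+Kt)/c$, so $T^{-1}\int_0^T \Exp_{x_0}[X_s^2]\,\D s$ stays bounded uniformly in $T>0$. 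Chebyshev's inequality then yields $\mu_T(\R\setminus[-r,r])\le \mathrm{const}/r^2$ for every $r>0$, whence $\{\mu_T\}_{T>0}$ is tight by Prokhorov's theorem and Krylov--Bogoliubov delivers an invariant probability measure. The main technical obstacle is the careful passage from the local-martingale identity to the moment bound; it rests on non-explosion together with the strict inequality in (\textbf{A2}), which is what secures a positive $c$ in the Lyapunov bound.
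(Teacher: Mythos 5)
Your proposal is correct, and its analytic core coincides with the paper's: the same Lyapunov function $\V(x)=x^2$, the same computation $\mathcal{L}\V(x)=-2x^4+2x^2+\upsigma^2(x)$, and the same use of the strict inequality in (\textbf{A2}) to make the drift term dominate outside a compact set, with localization by $\uptau_n$ handling the fact that \eqref{eq:martingale} is only a local martingale. Where you differ is in how the drift estimate is converted into existence: the paper extracts a bound of the form $\mathcal{L}\V(x)\le -b+C\,\mathbb{I}_{\bar{I}_r(0)}(x)$ and invokes the Meyn--Tweedie machinery (a quantitative lower bound on mean occupation of compacts, then their existence criterion for weak Feller processes), whereas you derive the stronger pointwise bound $\mathcal{L}\V(x)\le K-c\,x^2$, obtain a uniform second-moment bound on the occupation measures, and conclude tightness plus Krylov--Bogoliubov using the $\mathcal{C}_b$-Feller property. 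Your route is more self-contained and even yields a little extra (uniform integrability of $x^2$ under the empirical measures, hence finite second moment of the invariant measure along the subsequence), while the paper's route is shorter on the page because the occupation-measure bookkeeping is delegated to the cited theorems; both hinge on exactly the same dissipativity supplied by (\textbf{A2}) and on non-explosion.
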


 \noindent Furthermore, we also show that if $\upsigma$ is ``regular'' enough, then  $\process{X}$ admits a unique equilibrium.
 
 \begin{theorem}\label{tmx} 
 Assume (\textbf{A1}) and (\textbf{A2}).	If there is an open interval $I$ containing $-1,$ $0,$ and $1$, such that $\inf_{x\in I}\upsigma(x)>0$, then  $\process{X}$ admits a unique equilibrium $\uppi$ such  that for any $\upkappa>0$, \ben\lim_{t\nearrow\infty}\e^{\upkappa t}\lVert p^t(x,\D y)-\uppi(\D y)\rVert_{\mathcal{TV}}=0,\qquad x\in\R,\een where  $\lVert\cdot\lVert_{\mathcal{TV}}$ stands for the total variation norm on the space of signed measures.
 \end{theorem}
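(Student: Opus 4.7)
The plan is a Foster--Lyapunov / Meyn--Tweedie drift-and-petiteness argument. The key observation is that the super-linear drift $-x^3+x$ combined with (\textbf{A2}) lets the single Lyapunov function $V(x)=1+x^2$ satisfy a geometric drift inequality at \emph{arbitrarily} large rate $\upkappa$; by the Down--Meyn--Tweedie theorem this yields exponential ergodicity at rate $\upkappa$, and, $\upkappa$ being arbitrary, the super-exponential decay claimed.

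First I verify the drift condition. From \eqref{eq:operator},
\[
\mathcal{L}V(x)=(-x^3+x)(2x)+\upsigma^2(x)=-2x^4+2x^2+\upsigma^2(x).
\]
By (\textbf{A2}) one may fix $\eta\in\bigl(\limsup_{|x|\to\infty}\upsigma^2(x)/x^4,\,2\bigr)$ and $R>0$ with $\upsigma^2(x)\le\eta x^4$ for $|x|\ge R$, so $\mathcal{L}V(x)\le-(2-\eta)x^4+2x^2$ there, and hence $-\mathcal{L}V(x)/V(x)\to+\infty$ as $|x|\to\infty$. Consequently, for every $\upkappa>0$ there exist $R_\upkappa,b_\upkappa>0$ such that
\[
\mathcal{L}V\le-\upkappa V+b_\upkappa\,\mathbf{1}_{[-R_\upkappa,R_\upkappa]}.
\]

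Next I establish $\phi$-irreducibility, aperiodicity, and the petite-set property for compacts. The hypothesis $\inf_{I}\upsigma>0$ together with (\textbf{A1}) makes $\process{X}$ into a non-degenerate one-dimensional diffusion on $I$, so its transition kernel charges every relatively open subset of $I$ with positive probability. Since the deterministic flow $\dot x=-x^3+x$ attracts every initial point into $I$ in finite time ($\pm 1$ being global attractors of the drift), a Stroock--Varadhan support argument then extends irreducibility to all of $\R$ and yields the petite-set property for any compact $K\subset\R$. Combining this with the $\mathcal{C}_b$-Feller property recorded in the introduction and the existence result Theorem~\ref{tmy}, the Down--Meyn--Tweedie criterion for exponential ergodicity supplies, for each $\upkappa>0$, a constant $M_\upkappa>0$ such that
\[
\lVert p^t(x,\cdot)-\uppi\rVert_{\mathcal{TV}}\le M_\upkappa V(x)\,\e^{-\upkappa t},\qquad x\in\R,\ t\ge0,
\]
with $\uppi$ the unique invariant measure (uniqueness being automatic from $\phi$-irreducibility). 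Letting $\upkappa$ vary gives the claim.

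The principal obstacle is the irreducibility/petiteness step, since $\upsigma$ may vanish outside $I$: standard results for uniformly elliptic diffusions do not apply directly, and one must combine the non-degeneracy on $I$ with a control/support-theoretic argument exploiting the drift-induced attraction toward $I$ to obtain the minorization on compacts that underlies the petite-set property. A secondary, more technical point is to confirm that the drift rate $\upkappa$ in the Foster--Lyapunov inequality translates cleanly into the convergence rate $\upkappa$ here (and not some strictly smaller number)---this relies on the fact that on each compact $K_\upkappa$ the process is a uniformly elliptic 1D diffusion and thus mixes rapidly, so that the rate bottleneck is indeed the drift and not the minorization.
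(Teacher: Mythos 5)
Your high-level plan---a Foster--Lyapunov drift at arbitrary rate $\upkappa$, combined with $\phi$-irreducibility and petiteness of compacts, fed into Down--Meyn--Tweedie---is the right structure, and your drift computation for $\V(x)=1+x^2$ (essentially the same $\V(x)=x^2$ the paper uses) is correct: under (\textbf{A2}) one has $-\mathcal{L}\V/\V\to\infty$, so the drift rate can be taken as large as one likes. However, the irreducibility/petiteness step is a genuine gap, and you yourself flag it as "the principal obstacle" without closing it.

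The issue is that the Stroock--Varadhan support theorem, as you invoke it, yields only \emph{topological} irreducibility: from any $x$ the process enters any relatively open $O\subset I$ with positive probability. That is strictly weaker than what Meyn--Tweedie machinery requires, namely a minorization: a sampling measure $a$ and a nontrivial measure $\nu$ with $\int p^t(x,\cdot)\,a(\D t)\ge\nu(\cdot)$ uniformly over $x$ in a compact $K$, and, for the continuous-time exponential-ergodicity criterion, irreducibility (and aperiodicity) of some skeleton chain. Since $\upsigma$ is allowed to vanish outside $I$, the process is not globally strong Feller, so the usual shortcuts (strong Feller $+$ open-set irreducibility $\Rightarrow$ $T$-process $\Rightarrow$ compacts petite) are not available off the shelf, and "support $+$ attraction" alone does not deliver the lower bound by a fixed measure. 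This is precisely where the paper does extra work: it first uses the uniform bound $\sup_{x\in K}\Exp^x(\uptau_I)<\infty$ (Proposition~\ref{p2.2}) together with Khasminskii's results to get a unique ergodic invariant measure; it then produces the required minorization explicitly by considering the process killed on exiting $\bar I$, whose transition density $p^t(x,y)$ is jointly continuous and \emph{strictly positive} on $I$ (a classical parabolic-PDE fact valid because $\upsigma$ is non-degenerate on $I$), extends this to Borel sets via Dynkin's $\pi$--$\lambda$ theorem, and finally uses path continuity plus the a.s.\ finiteness of $\uptau_I$ to show the time-$1$ skeleton is irreducible with respect to $\uplambda(\cdot\cap I)$. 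To make your argument rigorous you would have to carry out an equivalent construction; replacing "Stroock--Varadhan support argument \ldots yields the petite-set property" by an actual minorization lemma of this type is the missing step.

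A minor further point: your closing "secondary" worry about whether the drift rate $\upkappa$ literally becomes the decay rate is not really an issue here, since for any target rate you simply re-choose $\upkappa$ larger; the drift bound holds for every $\upkappa$, so the decay is faster than any exponential regardless of the exact constant extracted from the Meyn--Tweedie theorem.
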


 On the other hand, if $\upsigma$ vanishes at $x_e\in\{-1,0,1\}$, then, obviously,  $X_t=x_e$, $t\ge0$, is a solution to \eqref{eq:langevin}, i.e. $\updelta_{x_e}$ is an invariant measure for $\process{X}$ . The point $x_e$ is said to be \textit{stable in probability} if for any $\upvarepsilon>0$, \ben\lim_{x\to x_e}\Prob^x\left(\sup_{t>0}|X_t-x_e|>\upvarepsilon\right)=0;\een otherwise it is called \textit{unstable}.
 It is called \textit{asymptotically stable in probability} if it is stable in probability and \ben\lim_{x\to x_e}\Prob^x\left(\lim_{t\nearrow\infty}|X_t-x_e|=0\right)=1.\een
 We then conclude the following.

\bt\label{tm1.1} Assume (\textbf{A1}), (\textbf{A2}) and that $\upsigma$ has a root at $x_e\in\{-1,0,1\}$.

\medskip

	\begin{itemize}
		\item[(i)] If $x_e=0$ and \ben\inf_{\upvarepsilon>0}\inf\left\{\upkappa:\frac{|\upsigma(x)|}{|x|}\le\upkappa,\ 0<|x|<\upvarepsilon\right\}<\sqrt{2},\een then $x_e$ is unstable.  Moreover, there is $\upvarepsilon>0$ such that $\Prob^x (\sup_{t\ge0}|X_t|<\upvarepsilon)=0$ for every $0<|x|<\upvarepsilon.$
		
		\medskip
		
		\item[(ii)] If $x_e=0$ and there is $\updelta>0$ such that $|\upsigma(x)|=\sqrt{2}|x|$ for $|x|<\updelta,$ then $x_e$ is unstable. Also, there is $0<\upvarepsilon<\updelta$ such that $\Prob^x (\sup_{t\ge0}|X_t|<\upvarepsilon)=0$ for every $0<|x|<\upvarepsilon.$ 
		
		\medskip
		
		\item[(iii)] If $x_e=0$ and \ben\inf_{\upvarepsilon>0}\sup\left\{\upkappa:\frac{|\upsigma(x)|}{|x|}\ge\upkappa,\ 0<|x|<\upvarepsilon\right\}>\sqrt{2},\een then $x_e$ is asymptotically stable in probability.
		
		\medskip
		
		\item [(iv)] If $x_e\in\{-1,1\}$, then $x_e$ is asymptotically stable in probability. Furthermore, if \ben c:=\upalpha\inf_{x\in\R,\,xx_e>1}\left(x(x+x_e)-(\upalpha-1)\frac{\upsigma(x)^2}{2}|x-x_e|^{-2}\right)>0\een for some $\upalpha>0$ (which is always the case for $0<\upalpha\le1$), then $\Exp^x(|X_t-x_e|^\upalpha)\le |x-x_e|^\upalpha \e^{-c t}$ for $x\in\R$, $xx_e\ge1$, and $t\ge0$, and \ben\lim_{t\nearrow\infty}\frac{\ln |X_t-x_e|}{t}\le -\frac{c}{\upalpha}\qquad \Prob^x\text{-a.s.}\een for all $x\in\R$, $xx_e\ge 1.$ 
	\end{itemize}
		
\et

\subsection{Stability of general diffusion processes}
At the end we discuss stability of general multidimensional  diffusion processes.

\begin{theorem}\label{tm3.1}
	Assume that  equation \be\label{eq:3.1} \D X_t=b(X_t)\D t+\upsigma(X_t)\D B_t,\qquad X_0\in\R^d,\ee admits a unique, non-explosive strong solution which is a strong Markov process with continuous sample paths, where  $b:\R^d\to\R^d$ and $\upsigma:\R^{d}\to\R^{d\times n}$ are  continuous, and $\process{B}$ stands for a standard $n$-dimensional Brownian motion.  Further, assume that there is $x_e\in\R^d$  such that $b$ and $\upsigma$ vanish at $x_e$ (hence, $X_t=x_e$, $t\ge0$, is a solution to \eqref{eq:3.1}).  If there are  $c>0$, and   concave, continuously differentiable and strictly increasing around the origin   function $\upvarphi:[0,\infty)\to[0,\infty)$, with $\upvarphi(0)=0$, such that 
	
	\medskip
	
	\begin{itemize}
		\item [(i)] the function $\Phi_c(t):=c^{-1}\int_1^t\D s/\upvarphi(s)$ maps $(0,\infty)$ onto $\R$;
		
		\medskip
		
		\item[(ii)] $\mathcal{L}\V(x)\le -c\,\upvarphi\circ\V(x)$ for $|x-x_e|>0$, where \ben\mathcal{L}f(x)=\langle b(x),\nabla f(x)\rangle+\frac{1}{2}{\rm Tr}\,\upsigma(x)\upsigma'(x)\mathcal{D}^2f(x),\qquad f\in\mathcal{C}^2(\R^d)\een and $\V(x):=|x-x_e|^\upalpha$ for some $\upalpha>0$,
	\end{itemize}
	
	\medskip
	
\noindent	then \ben |X_t-x_e|^\upalpha\le \Phi_c^{-1}(\Phi_c(Y_x)-t),\qquad\Prob^x\text{-a.s.},\ x\in\R^d,\ t\ge0,\een where $Y_x$ is a strictly positive $\Prob^x$-finite random variable. 
\end{theorem}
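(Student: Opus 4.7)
The plan is to introduce the auxiliary process $Z_t := \Phi_c(\V(X_t)) + t$, show that it is a continuous local supermartingale on $[0,S)$ where $S := \inf\{t : X_t = x_e\}$, establish that $\sup_t Z_t$ is almost surely finite, and then take $Y_x := \Phi_c^{-1}(\sup_t Z_t)$. Setting $g(x) := \Phi_c(\V(x))$, which is of class $\mathcal{C}^2$ on $\R^d\setminus\{x_e\}$, the chain rule gives
\[
\mathcal{L}g(x) = \Phi_c'(\V(x))\,\mathcal{L}\V(x) + \tfrac{1}{2}\Phi_c''(\V(x))\,|\upsigma(x)^T\nabla\V(x)|^2.
\]
Since $\Phi_c'(y) = 1/(c\upvarphi(y)) > 0$ and, by concavity of $\upvarphi$, $\Phi_c''(y) = -\upvarphi'(y)/(c\upvarphi(y)^2) \le 0$, hypothesis (ii) yields $\mathcal{L}g(x) \le -1$ for all $x \ne x_e$. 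Noting that $X_t \equiv x_e$ on $\{t \ge S\}$, where the asserted bound is immediate, It\^o's formula on $[0,S)$ gives $Z_t = Z_0 + \int_0^t (\mathcal{L}g(X_s)+1)\,\D s + N_t$, with $N$ a continuous local martingale and nonpositive drift integrand; hence $Z$ is a continuous local supermartingale.

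The main obstacle is to establish $\sup_{0\le t<S} Z_t < \infty$ $\Prob^x$-a.s. First I would observe that $\V(X_t)$ is itself a nonnegative local supermartingale (as $\mathcal{L}\V \le -c\upvarphi(\V) \le 0$), hence a genuine supermartingale; taking expectations in its It\^o decomposition yields $\Exp^x\bigl[\int_0^\infty \upvarphi(\V(X_s))\,\D s\bigr] \le \V(x)/c$, which combined with the a.s.\ convergence of $\V(X_t)$ and continuity of $\upvarphi$ forces $\V(X_t) \to 0$ $\Prob^x$-a.s. Next, on the event $\{\langle N\rangle_\infty < \infty\}$, $N$ converges a.s., so $\sup_t Z_t \le Z_0 + \sup_t N_t < \infty$. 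On $\{\langle N\rangle_\infty = \infty\}$, I would combine the strong law for continuous local martingales ($N_t/\langle N\rangle_t \to 0$) with the drift estimate $\int_0^t (\mathcal{L}g(X_s)+1)\,\D s \le -\tfrac{c}{2}\int_0^t \upvarphi'(\V(X_s))\,\D\langle N\rangle_s$ (obtained from the identity $\Phi_c''/(\Phi_c')^2 = -c\upvarphi'$) and the fact that $\V(X_t) \to 0$ makes $\upvarphi'(\V(X_t))$ eventually bounded below (since $\upvarphi' > 0$ near the origin); this forces $Z_t \to -\infty$ along the diverging $\langle N\rangle_t$, so again $\sup_t Z_t < \infty$.

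With $\sup_{0 \le t < S} Z_t < \infty$ a.s.\ in hand, set $Y_x := \Phi_c^{-1}(\sup_{0 \le t < S} Z_t)$. The bijection $\Phi_c^{-1}:\R \to (0,\infty)$ ensures $Y_x$ is strictly positive and $\Prob^x$-finite. By construction, $\Phi_c(\V(X_t)) + t \le \Phi_c(Y_x)$ for all $t \in [0,S)$; applying the increasing map $\Phi_c^{-1}$ rearranges this into $|X_t - x_e|^\upalpha \le \Phi_c^{-1}(\Phi_c(Y_x) - t)$, and the case $t \ge S$ is immediate since then $X_t = x_e$.
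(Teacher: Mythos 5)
Your proposal is correct in substance, but it reaches the conclusion by a genuinely different route than the paper. The paper first shows that $\bigl\{\V(X_{t\wedge\uptau_n})+c\int_0^{t\wedge\uptau_n}\upvarphi\circ\V(X_s)\,\D s\bigr\}_{t\ge0}$ is a supermartingale and then invokes a transformation lemma (\cite[Corollary 4.5]{Hairer-Lecture-notes-2016}) to conclude directly that the \emph{nonnegative} process $f(t,X_t)=\Phi_c^{-1}\bigl(t+\Phi_c\circ\V(X_t)\bigr)$ is a supermartingale; positive supermartingale convergence then immediately yields the a.s.\ finite bound $Y_x:=\sup_{t\ge0}f(t,X_t)$, and the paper disposes of the singularity at $x_e$ by citing non-attainability (\cite[Lemma 5.3]{Khasminskii-Book-2012}). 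You instead keep the additive process $Z_t=\Phi_c(\V(X_t))+t$, which is only a local supermartingale and is unbounded below, so supermartingale convergence is unavailable and you must prove $\sup_t Z_t<\infty$ by hand: your computation $\mathcal{L}(\Phi_c\circ\V)\le-1$, the identity $\Phi_c''/(\Phi_c')^2=-c\,\upvarphi'$, the consequence $\V(X_t)\to0$ a.s.\ (from the expectation bound on $\int_0^\infty\upvarphi\circ\V(X_s)\,\D s$ plus supermartingale convergence of $\V(X_t)$), and the dichotomy on $\langle N\rangle_\infty$ with the strong law $N_t/\langle N\rangle_t\to0$ are all correct (concavity gives the needed lower bound $\upvarphi'(\V(X_s))\ge\upvarphi'(y_0)>0$ once $\V(X_s)\le y_0$), and your absorption argument at $S=\inf\{t:X_t=x_e\}$ lets you avoid the non-attainability lemma altogether. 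What the paper's route buys is brevity and the avoidance of the quadratic-variation analysis, at the price of two external citations; what your route buys is a self-contained argument. Do note the technical points you gloss: $\V$ and $\Phi_c\circ\V$ are $\mathcal{C}^2$ only on $\R^d\setminus\{x_e\}$, so all It\^o/supermartingale statements require localization both away from $x_e$ and at infinity (with the identity $X_t\equiv x_e$ on $[S,\infty)$ justified by uniqueness plus the strong Markov property), and on $\{S<\infty\}$ the convergence and LLN statements for $N$ must be read on the stochastic interval $[0,S)$ (e.g.\ via the Dambis--Dubins--Schwarz time change); alternatively, on $\{S<\infty\}$ the bound $\sup_{t<S}Z_t<\infty$ is immediate since $Z_t\to-\infty$ as $t\uparrow S$. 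These are routine and do not affect the validity of your argument.
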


	As a consequence of Theorem \ref{tm3.1} we get a generalization of \cite[Theorem 5.15]{Khasminskii-Book-2012} (where it is assumed that $\mathcal{L}\V(x)\le-c\V(x)$ for some $c>0$ and all $|x-x_e|>0$). Also, we conclude super-geometric stability result.
	
	\begin{corollary}\label{c1.1} Assume the conditions of Theorem \ref{tm3.1}. 
		\begin{itemize}
			\item [(i)] If 	\ben\mathcal{L}\V(x)\le \left\{\begin{array}{cc}
				-c\V(x), &  0<\V(x)\le r \\
				-cr,&
				\V(x)\ge r,
			\end{array}\right. \een for some $c>0$ and $r>0,$ then \ben \limsup_{t\nearrow\infty}\frac{\ln|X_t-x_e|}{t}\le-\frac{c}{\upalpha}.\een
			\item[(ii)] If there are $c>0$, $\upbeta>1$ and $0<r_\upbeta\le\e^{1/\upbeta-1}$ such that $\mathcal{L}\V(x)\le-c\upvarphi_{\upbeta}\circ\V(x)$ for  all $|x-x_e|>0$, where 
			\ben\upvarphi_{\upbeta}(t):=\left\{\begin{array}{cc}
				\upbeta t\left(-\ln t\right)^{1-1/\upbeta}, & 0\le t\le r_\upbeta \\
				\upbeta r_\upbeta(-\ln r_\upbeta)^{1-1/\upbeta},&
				t\ge r_\upbeta,
			\end{array}\right. \een
			then \ben \limsup_{t\nearrow\infty}\frac{\ln|X_t-x_e|}{t^\upbeta}\le-\frac{c^\upbeta}{\upalpha}\een
			($r_{\upbeta}$ is chosen  such that $\upvarphi_{\upbeta}$ is non-decreasing).
		\end{itemize}
	\end{corollary}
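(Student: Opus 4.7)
Both items are instances of Theorem~\ref{tm3.1} with appropriately chosen functions $\upvarphi$; the plan is therefore to identify the right $\upvarphi$ in each case, verify the hypotheses, compute the asymptotics of $\Phi_c^{-1}(u)$ as $u\to -\infty$, and substitute into the pathwise bound
\ben |X_t-x_e|^\upalpha \le \Phi_c^{-1}(\Phi_c(Y_x)-t)\een
furnished by Theorem~\ref{tm3.1}. Taking a logarithm and dividing by the appropriate power of $t$ then delivers the stated $\limsup$.

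For (i) I take $\upvarphi(t)=\min\{t,r\}$, which reads the given hypothesis exactly as $\mathcal{L}\V(x)\le -c\upvarphi(\V(x))$; if global $C^1$-regularity is needed to invoke the theorem, I replace it with the dominated smooth surrogate $\tilde\upvarphi(t)=r(1-\e^{-t/r})$ (which satisfies $\tilde\upvarphi\le\min(\cdot,r)$, hence preserves the required upper bound on $\mathcal{L}\V$). A direct piecewise evaluation of $\int_1^t\D s/\upvarphi(s)$ gives $\Phi_c(t)=c^{-1}\ln t+O(1)$ as $t\downto 0$ and $\Phi_c(t)$ affine for large $t$, so $\Phi_c$ is a bijection of $(0,\infty)$ onto $\R$ and $\Phi_c^{-1}(u)=\e^{cu+O(1)}$ as $u\to-\infty$. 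Feeding this into the pathwise bound, taking logarithms, and dividing by $t$ yields $\limsup_{t\nearrow\infty}t^{-1}\ln|X_t-x_e|\le -c/\upalpha$, proving (i).

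For (ii) the specific form of $\upvarphi_\upbeta$ is calibrated precisely so that the substitution $u=-\ln s$ produces the closed-form antiderivative
\ben\int\frac{\D s}{\upbeta s(-\ln s)^{1-1/\upbeta}}=-(-\ln s)^{1/\upbeta}+\mathrm{const}\een
on $(0,r_\upbeta]$, while on $[r_\upbeta,\infty)$ the integrand is constant. Consequently $\Phi_c(t)=-c^{-1}(-\ln t)^{1/\upbeta}+O(1)$ as $t\downto 0$, so $\Phi_c$ is onto $\R$, and, as $u\to-\infty$,
\ben \Phi_c^{-1}(u)=\exp\bigl(-c^\upbeta|u|^\upbeta(1+o(1))\bigr).\een
Substituting into the pathwise bound of Theorem~\ref{tm3.1} and proceeding as in (i), but dividing by $t^\upbeta$ rather than $t$, gives the claimed super-geometric rate $-c^\upbeta/\upalpha$.

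The main technical point I expect to have to dispatch is verifying the regularity hypotheses on $\upvarphi$ required by Theorem~\ref{tm3.1}, namely global concavity and continuous differentiability together with strict monotonicity near the origin. In (i) this is handled by the $C^\infty$ surrogate $r(1-\e^{-t/r})$ above; the small change from $\min(t,r)$ does not affect the leading $\ln$-asymptotics of $\Phi_c$ at $0$, which is all that governs the claim. For (ii), the assumption $r_\upbeta\le\e^{1/\upbeta-1}$ is exactly what the statement notes is needed for monotonicity on $(0,r_\upbeta]$; concavity follows from a direct computation of $\upvarphi_\upbeta''<0$ near the origin, and if $r_\upbeta<\e^{1/\upbeta-1}$ so that $\upvarphi_\upbeta$ has a kink at $r_\upbeta$, a small local smoothing preserves both the dominating inequality and the asymptotics that drive the bound.
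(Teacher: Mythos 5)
Your proposal is correct and follows essentially the same route as the paper: in both cases one replaces the piecewise-defined upper bound on $\mathcal{L}\V$ by a concave, $C^1$, dominated $\upvarphi$, invokes Theorem~\ref{tm3.1}, and then reads off the rate from the asymptotics of $\Phi_c^{-1}$. The only cosmetic difference is in part (i): the paper keeps $\upvarphi(t)=t$ exactly on $[0,r-\upvarepsilon]$ and smooths only the kink at $r$, whereas you propose the global surrogate $r(1-\e^{-t/r})$; since $r(1-\e^{-t/r})\le\min(t,r)$, is concave and $C^\infty$ with $\upvarphi'(0)=1$, its $\Phi_c$ has the same $c^{-1}\ln t+O(1)$ behaviour at $0$, so either choice delivers the limit $\ln\Phi_c^{-1}(\Phi_c(Y_x)-t)/t\to -c$. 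Your explicit computation of the antiderivative in part (ii) via $u=-\ln s$ and the resulting $\Phi_c^{-1}(u)=\exp(-c^\upbeta|u|^\upbeta(1+o(1)))$ is exactly the calculation the paper leaves implicit.
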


The remainder of the article is organized as follows. In Section \ref{s2}, we discuss existence and uniqueness of invariant measures of $\process{X}$, and prove Theorems \ref{tmy}, \ref{tmx} and  \ref {tm1.1}. In Section \ref{s3}, we prove Theorem \ref{tm3.1} and Corollary \ref{c1.1}, and discuss super-geometric stability of general diffusion processes.

 \section{Stability of the overdamped Langevin equation \eqref{eq:langevin}}\label{s2}

In this section, we discuss existence and uniqueness of invariant measures of $\process{X}$.
  For $a\in\R$ and $r>0$ denote by $I_r(a)$ the open $r$-interval around $a$, i.e. $I_r(a):=(a-r,a+r).$  Also, $\bar{I}_{r}(a)$ and $I^c_{r}(a)$ denote, respectively, the closure and complement of $I_{r}(a)$.

\begin{proof}[Proof of Theorem \ref{tmy}.] 
	According to \cite[Theorem 3.1]{Meyn-Tweedie-AdvAP-II-1993}
it suffices to prove that for each $x\in\R$ and $0<\upvarepsilon<1$, there is a compact set $K\subset\R$ such that \ben\liminf_{t\nearrow\infty}\frac{1}{t}\int_0^tp^s(x,K)\D s\ge 1-\upvarepsilon.\een	In order to show this, define $\mathcal{V}(x):=x^2$ and observe that \ben \mathcal{L}\V(x)=-2x^4+2x^2+\upsigma(x)^2.\een Now, according to (\textbf{A2}), there are  $0<\updelta<2$ and $r_\updelta>2/\sqrt{\updelta}$, such that $\upsigma(x)^2\le (2-\updelta)|x|^4$ for $|x|\ge r_\updelta.$
	Thus,
	 \ban \mathcal{L}\V(x)=&-2x^4\mathbb{I}_{\bar{I}_{r_\updelta}(0)}(x)-2x^4\mathbb{I}_{\bar{I}^c_{r_\updelta}(0)}(x)+2x^2\mathbb{I}_{\bar{I}_{r_\updelta}(0)}(x)+2x^2\mathbb{I}_{\bar{I}^c_{r_\updelta}(0)}(x)\\&+\upsigma(x)^2\mathbb{I}_{\bar{I}_{r_\updelta}(0)}(x)+\upsigma(x)^2\mathbb{I}_{\bar{I}^c_{r_\updelta}(0)}(x)\\
	 \le&-\frac{\updelta}{2}x^4\mathbb{I}_{\bar{I}^c_{r_\updelta}(0)}(x)+(-2x^4+2x^2+\upsigma(x)^2)\mathbb{I}_{\bar{I}_{r_\updelta}(0)}(x)\\
	 \le& -\frac{\updelta}{2}r_\updelta^4+\left(\frac{\updelta}{2}r_\updelta^4+2r_\updelta^2+\upsigma(x)^2\right)\mathbb{I}_{\bar{I}_{r_\updelta}(0)}(x).
	\ean
By	denoting $\upsigma:=\sup_{x\in\bar{I}_{r_\updelta}(0)}|\upsigma(x)|$, 
we get $$\mathcal{L}\V(x)\le -\frac{\updelta}{2}r_\updelta^4+\left(\frac{\updelta}{2}r_\updelta^4+2r_\updelta^2+\upsigma^2\right)\mathbb{I}_{\bar{I}_{r_\updelta}(0)}(x).$$ Analogously, for $r>r_\updelta$ we conclude $$\mathcal{L}\V(x)\le -\frac{\updelta}{2}r^4+\left(\frac{\updelta}{2}r^4+2r^2+\upsigma^2+(2-\updelta)r^4\right)\mathbb{I}_{\bar{I}_{r}(0)}(x),$$
where we used the fact \ben|\upsigma(x)|^2\mathbb{I}_{\bar{I}_{r}(0)}\le(\upsigma^2+(2-\updelta)r^4)\mathbb{I}_{\bar{I}_{r}(0)}.\een Now, from \cite[Theorem 1.1]{Meyn-Tweedie-AdvAP-III-1993} we conclude that for each $x\in\R$ and $r>r_\updelta$ we have \ben\liminf_{t\nearrow\infty}\frac{1}{t}\int_0^tp^s(x,\bar{I}_{r}(0))\D t\ge\frac{(\updelta/2)r^4}{(\updelta/2)r^4+2r^2+\upsigma^2+(2-\updelta)r^4}.\een
The assertion now follows by choosing $\updelta$ close to $2$ and $r$ large enough.
	\end{proof}

Standard assumptions which ensure uniqueness  of an invariant measure are strong Feller property and open-set irreducibility. Recall, $\process{X}$ is called 

\medskip

\begin{itemize}
	\item [(i)] \textit{strong Feller} if $P_tf\in \mathcal{C}_b(\R)$ for any $t>0$ and $f\in\mathcal{B}_b(\R).$
	
	\medskip
	
	\item[(ii)] \textit{open-set irreducible} if for any $x\in\R$ and open set $O\subseteq\R$, \ben\int_{0}^\infty p^t(x,O)\D t>0.\een\end{itemize} 

\medskip
	
\noindent According to \cite[Theorem 5.2 and Lemma 6.1]{Xi-Zhu-2017} 	$\process{X}$ will be strong Feller and open-set irreducible if $\inf_{x\in\R}|\upsigma(x)|>0$. Furthermore, under the same assumption, (\textbf{A2}) together with \cite[Theorem 6.1]{Meyn-Tweedie-AdvAP-III-1993} and \cite[Theorems 3.2 and 5.1]{Tweedie-1994} (by taking $\V(x)=x^2$) implies that $\process{X}$ admits a unique invariant measure  $\uppi$ such that for any $\upkappa>0$, \ben\lim_{t\nearrow\infty}\e^{\upkappa t}\lVert p^t(x,\D y)-\uppi(\D y)\rVert_{\mathcal{TV}}=0,\qquad x\in\R.\een
 However, in many interesting situations the diffusion coefficient $\upsigma$ can be singular, i.e. it can vanish (see e.g. \cite{Masoliver-Garrido-Llosa-1987} and \cite{Drozdov-Talkner-1998}). 

\subsection{Equilibria of the overdamped Langevin equation \eqref{eq:langevin} with singular noise term}

Assume (\textbf{A1}), (\textbf{A2}) and

\medskip
\begin{description}
	\item [A3] There is a bounded open interval $I\subset\R$ such that
	
	\medskip
	
	\begin{itemize}
		\item [(i)] $\inf_{x\in I}|\upsigma(x)|>0;$
		
		\medskip
		
		\item[(ii)] $\sup_{x\in K}\Exp^x(\uptau_I)<\infty$ for any compact $K\subset\R,$ where $\uptau_I:=\inf\{t\ge0:X_t\in I\}.$
	\end{itemize}
\end{description}

\medskip

\noindent Then, according to \cite[Theorems 4.1 and 4.2, and Corollary 4.4]{Khasminskii-Book-2012} $\process{X}$ admits a unique invariant measure $\uppi$ such that 
\be\label{eq:x} \lim_{t\nearrow\infty}\frac{1}{t}\int_0^t f(X_s)\D s=\int_{\R}f(y)\uppi(\D y),\qquad \Prob^x\text{-a.s.},\ee for every $x\in\R$ and $f\in\mathcal{B}_b(\R).$

\begin{proposition}\label{p2.2} The process $\process{X}$ will satisfy (\textbf{A3}) if there is an open interval $I$, containing $-1$, $0$ and $1$, such that $\inf_{x\in I}|\upsigma(x)|>0.$ 
	\end{proposition}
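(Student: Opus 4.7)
The plan is to verify both conditions in \textbf{(A3)}. To avoid a clash of notation, let me denote the interval from the hypothesis by $J$ (so $-1,0,1\in J$ and $\inf_{x\in J}|\upsigma(x)|>0$), and construct the bounded open interval required in \textbf{(A3)} as $I=(\upalpha,\upbeta)$ with $\overline I\subset J$---such a choice exists because $J$ is open and contains the compact set $\{-1,0,1\}$. In particular, $\upalpha<-1<1<\upbeta$ and $\upalpha,\upbeta\in J$. Condition (i) is then immediate: $\overline I$ is a compact subset of the open set $J$ on which the continuous function $|\upsigma|$ is strictly positive, so $\inf_{x\in I}|\upsigma(x)|\ge\inf_{x\in\overline I}|\upsigma(x)|>0$.

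For condition (ii), I would treat each connected component of $I^c$ separately via a linear Lyapunov function. On $[\upbeta,\infty)$ set $V(x)=x-\upbeta\ge 0$, so $V'=1$, $V''=0$, and
\ben
\mathcal{L}V(x)=-x^3+x=-x(x^2-1).
\een
Since $x\mapsto x(x^2-1)$ is strictly increasing on $(1/\sqrt{3},\infty)$ and $\upbeta>1$, this gives $\mathcal{L}V(x)\le-\upbeta(\upbeta^2-1)=:-c_+<0$ on $[\upbeta,\infty)$. Setting $\uptau_\upbeta:=\inf\{t\ge 0:X_t\le\upbeta\}$ and applying Dynkin's formula along the localizing sequence $\uptau_\upbeta\wedge t\wedge\inf\{s:|X_s|\ge M\}$, then letting $M,t\nearrow\infty$ by non-explosion of $\process{X}$ and monotone convergence, yields $\Exp^x[\uptau_\upbeta]\le(x-\upbeta)/c_+$ for every $x\ge\upbeta$. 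The analogous argument on $(-\infty,\upalpha]$ with $V(x)=\upalpha-x$ and $c_-:=|\upalpha|(\upalpha^2-1)>0$ gives $\Exp^x[\uptau_\upalpha]\le(\upalpha-x)/c_-$ for $x\le\upalpha$, where $\uptau_\upalpha:=\inf\{t\ge 0:X_t\ge\upalpha\}$.

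It then remains to link the hitting times $\uptau_\upbeta,\uptau_\upalpha$ to $\uptau_I$. For $x>\upbeta$ the inclusion $\uptau_I\ge\uptau_\upbeta$ is immediate from continuity of paths, so by the strong Markov property at $\uptau_\upbeta$ the reverse inequality reduces to $\Prob^\upbeta(\uptau_I=0)=1$: the process started at $\upbeta$ must enter $I=(\upalpha,\upbeta)$ in arbitrarily short time. This is the main (though routine) subtlety: because $\upbeta\in J$ forces $\upsigma(\upbeta)>0$, a local analysis of the SDE at $t=0$ shows that $X_t-\upbeta$ is dominated by $\upsigma(\upbeta)B_t$, and the law of iterated logarithm for Brownian motion, giving $\liminf_{t\downto 0}B_t/\sqrt{2t\log\log(1/t)}=-1$ a.s., forces $X_t<\upbeta$ along a sequence $t\downarrow 0$; continuity then gives $X_t>\upalpha$ for all sufficiently small $t$, so $\uptau_I=0$ a.s. Symmetrically, $\uptau_I=\uptau_\upalpha$ a.s.\ under $\Prob^x$ for $x<\upalpha$, and $\uptau_I=0$ trivially for $x\in I$. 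Combining, $\sup_{x\in K}\Exp^x[\uptau_I]<\infty$ for every compact $K\subset\R$, which is (ii).
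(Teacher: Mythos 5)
Your proof is correct and uses the same Dynkin/Lyapunov strategy as the paper (on $[\upbeta,\infty)$ your $V(x)=x-\upbeta$ differs from the paper's $\V(x)=|x|$ only by a constant, which $\mathcal L$ kills). The one place you take a genuinely different and longer route is the last step: you first bound $\Exp^x[\uptau_\upbeta]$ where $\uptau_\upbeta=\inf\{t:X_t\le\upbeta\}$, and then reduce $\uptau_I$ to $\uptau_\upbeta$ via the strong Markov property plus an LIL/regularity argument showing $\Prob^\upbeta(\uptau_I=0)=1$. That step is valid (it is exactly where $\upsigma(\upbeta)>0$ would enter), but it is unnecessary, and the paper avoids it: apply Dynkin's formula directly along the localizing sequence $\uptau_I\wedge t\wedge\uptau_M$. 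For $x>\upbeta$ and $s\le\uptau_I$, continuity of paths already forces $X_s\in[\upbeta,\infty)$ (the path cannot enter $(\upalpha,\upbeta)$ before $\uptau_I$ by definition, and cannot reach $(-\infty,\upalpha]$ without crossing $I$), so $\mathcal L V(X_s)\le-c_+$ holds on the whole stopped path and $V(X_{s\wedge\uptau_I})\ge 0$; hence
\begin{equation*}
0\le\Exp^x\bigl[V(X_{\uptau_I\wedge t\wedge\uptau_M})\bigr]\le V(x)-c_+\,\Exp^x[\uptau_I\wedge t\wedge\uptau_M],
\end{equation*}
and monotone convergence gives $\Exp^x[\uptau_I]\le(x-\upbeta)/c_+$ with no appeal to boundary regularity. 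In short: you have proved the proposition, but the LIL detour can be deleted; replace $\uptau_\upbeta$ by $\uptau_I$ throughout your Dynkin step, as the paper does. One small informal point worth flagging if you keep your version: ``$X_t-\upbeta$ is dominated by $\upsigma(\upbeta)B_t$'' would need a short justification (e.g.\ regularity of one-dimensional diffusions at points where $\upsigma\neq 0$, or a Blumenthal $0$--$1$ argument), since the drift and the variability of $\upsigma$ near $\upbeta$ are being waved away.
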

\begin{proof} 
	By assumption, there is $0<\upvarepsilon<1$ such that $I_\upvarepsilon:=(-1-\upvarepsilon, 1+\upvarepsilon)\subset I$. Thus, in particular $\inf_{x\in I_\upvarepsilon}|\upsigma(x)|>0.$
	Let us now show that $\sup_{x\in K}\Exp^x(\uptau_{I_\upvarepsilon})<\infty$ for any compact $K\subset\R$. Clearly, it suffices to prove the assertion for compact subsets of $I_{\upvarepsilon}^c$ only. Let   $\V:\R\to\R_+$, $\V\in \mathcal{C}^2(\R),$ be such that $\V(x)=|x|\I_{_{I_\upvarepsilon ^c}}(x)$. Further, for $n\in\N$ define $\uptau_n:=\inf\{t\ge0:|X_t|\ge n\}$. Clearly, since $\process{X}$ is conservative, $\uptau_{n}\nearrow\infty$, as $n\nearrow\infty$. Now, due to the martingale property of the process $\{M^\V_{t\wedge\uptau_{I_\upvarepsilon}\wedge\uptau_n}\}_{t\ge0}$ (defined in \eqref{eq:martingale}), we have
	\ben \Exp^x(\V(X_{t\wedge\uptau_{I_\upvarepsilon}\wedge\uptau_n}))-\V(x)=\Exp^x\left(\int_{0}^{t\wedge\uptau_{I_\upvarepsilon}\wedge\uptau_n}\mathcal{L}\V(X_s)\D s\right),\qquad t\ge0,\ x\in\R,\ n\in\N.\een 
	In particular, for $x\in I_{\upvarepsilon}^c$, $x<-1$, we have \ban \V(x)&\ge-\Exp^x\left(\int_{0}^{t\wedge\uptau_{I_\upvarepsilon}\wedge\uptau_n}\mathcal{L}\V(X_s)\D s\right)\\&=
	\Exp^x\left(\int_{0}^{t\wedge\uptau_{I_\upvarepsilon}\wedge\uptau_n}(-X_s^3+X_s)\D s\right)\\&\ge\left((1+\upvarepsilon)^3-(1+\upvarepsilon)\right)\Exp^x(t\wedge\uptau_{I_\upvarepsilon}\wedge\uptau_n),\qquad t\ge0,\ n\in\N.\ean By letting $t\nearrow\infty$ and $n\nearrow\infty$ we conclude \ben \Exp^x(\uptau_{I_\upvarepsilon})\le\frac{|x|}{(1+\upvarepsilon)^3-(1+\upvarepsilon)},\qquad x\in _{I_\upvarepsilon}^c,\ x<-1.\een Analogously, for $x\in _{I_\upvarepsilon}^c$, $x>1$, we  have \ben\Exp^x(\uptau_{I_\upvarepsilon})\le\frac{|x|}{(1+\upvarepsilon)^3-(1+\upvarepsilon)},\een which concludes the proof. 
	\end{proof} 
	
	\begin{remark} Let us remark that the above results can be slightly generalized. Namely, according to \cite[Theorem 5.2]{Abundo-2000} and \cite[Lemma 4.6]{Khasminskii-Book-2012}, $\process{X}$ will satisfy (\textbf{A3}) if there is $0<\upvarepsilon<1/2$  such that
		
		\medskip
		
		 \begin{itemize}
			\item [(i)] $\upsigma$ does not vanish on $I_\upvarepsilon(-1)\cup I_\upvarepsilon(0)\cup I_\upvarepsilon(1);$
			
			\medskip
			
			\item[(ii)] $\sup_{x\in [-1,0]}\Exp^x(\uptau_{I_\upvarepsilon(-1)}\vee\uptau_{I_\upvarepsilon(0)})<\infty$ and $\sup_{x\in [0,1]}\Exp^x(\uptau_{I_\upvarepsilon(0)}\vee\uptau_{I_\upvarepsilon(1)})<\infty$.
		\end{itemize} 
		\end{remark}

	\begin{proof}[Proof of Theorem \ref{tmx}] The first assertion follows from Proposition \ref{p2.2} and \cite[Corollary 4.4]{Khasminskii-Book-2012}.  To prove the second assertion we proceed as follows. From \eqref{eq:x} we automatically conclude that $\process{X}$ is \textit{$\uppi$-irreducible}, i.e.  \ben\int_0^\infty p^t(x,B)\D t>0,\qquad x\in\R,\een whenever $\uppi(B)>0$, $B\in \mathfrak{B}(\R)$. Here, $\mathfrak{B}(\R)$ denotes the Borel $\sigma$-algebra on $\R$.
		Next, \cite[Lemma 4.8]{Khasminskii-Book-2012} implies that the support of $\uppi$ has a non-empty interior, which together with the fact that $\process{X}$ is a $C_b$-Feller process, \eqref{eq:x} and \cite[Theorems 3.4 and 7.1]{Tweedie-1994} implies that $\process{X}$ is \textit{positive Harris recurrent process}, i.e. there is a $\sigma$-finite measure $\upvarphi$ such that \ben \Prob^x\left(\int_0^\infty \I_B(X_t)\D t=\infty\right)=1,\qquad x\in\R,\een whenever $\upvarphi(B)>0$, $B\in\mathfrak{B}(\R)$. Now, according to \cite[Theorems 5.1 and 7.1]{Tweedie-1994} and  \cite[Theorem 6.1]{Meyn-Tweedie-AdvAP-III-1993} (by taking $\V(x)=x^2$), the assertion will follow if we show that there is a $\sigma$-finite measure $\upphi$, whose support has a non-empty interior,  such that \be\label{eq:xx} \sum_{n=1}^\infty p^n(x,B)>0,\qquad x\in\R,\ee whenever $\upphi(B)>0,$  $B\in\mathfrak{B}(\R)$.
		Due to \cite[Theorems 7.3.6 and 7.3.7]{Durrett-Book-1996} there is a function  $p^t(x,y)>0$, $t>0,$ $x,y\in \bar{I}$, jointly continuous in $t$ and $x,y$, and $\mathcal{C}^2$ in $x$ on $I$, satisfying \ben \Exp^x(f(X_t),\uptau_{\bar{I}^c}>t)=\int_{I}p^t(x,y)f(y)\D y,\qquad t>0,\ x\in I,\ f\in\mathcal{C}_b(\R),\een where $\uptau_{\bar{I}^c}:=\inf\{t\ge0:X_t\in \bar{I}^c\}.$ Clearly, by employing dominated convergence theorem, the above relation holds also for any open interval $J\subseteq I$. Denote by $\mathcal{D}$ the class of all  $B\in\mathfrak{B}(I)$ (the Borel $\sigma$-algebra on $I$) such that \ben \Prob^x(X_t\in B,\ \uptau_{\bar{I}^c}>t)=\int_{B}p^t(x,y)\D y,\qquad t>0,\ x\in I.\een Clearly, $\mathcal{D}$ contains the $\pi$-system of open intervals in $I$, and forms a $\lambda$-system. Hence, by employing the famous Dynkin's $\pi$-$\lambda$ theorem we conclude that $\mathcal{D}=\mathfrak{B}(I).$ Consequently, for any $t>0$, $x\in I$ and $B\in\mathfrak{B}(\R)$ we have that
		\ben p^t(x,B)\ge \int_{B\cap I}p^t(x,y)\D y.\een Let us now take $\upphi(\cdot):=\uplambda(\cdot\cap I)$ and prove \eqref{eq:xx}, where $\uplambda$ stands for the Lebesgue measure on $\R$. Let  $x\in I^c$ (for $x\in I$ the assertion is obvious) and $B\in\mathfrak{B}(\R)$, $\upphi(B)>0$, be arbitrary. Then,
		\ben 
		\sum_{n=1}^\infty p^n(x,B)\ge\int_I\sum_{n=1}^\infty p^{n-t}(x,\D y)p^{t}(y,B),\qquad 0<t<1.
		\een Since $p^{t}(y,B)>0$ for $y\in I$, it suffices to show that \ben\sum_{n=1}^\infty p^{n-t}(x,I)\ge\Prob^x\left(\bigcup_{n=1}^\infty\{X_{n-t}\in I\}\right)>0\een for some $0<t<1$. Assume this is not the case, i.e. that \ben \Prob^x\left(\bigcup_{n=1}^\infty\{X_{n-t}\in I\}\right)=0,\qquad 0<t<1.\een This, in particular, implies that \ben\Prob^x\left(\bigcup_{q\in\Q\setminus\ZZ}\{X_{q}\in I\}\right)=0,\een which is impossible since $\process{X}$ has continuous sample paths, $I$ is an open set and, by assumption, $\Prob^x(\uptau_I<\infty)=1$ for every $x\in\R$. Thus, \ben 
		\sum_{n=1}^\infty p^n(x,B)>0,\qquad x\in\R,\een whenever $\upphi(B)>0$, which concludes the proof.
	\end{proof}

	The crucial assumption in the above discussion was that $\upsigma$ does not vanish at the roots of $\nabla V$, i.e. at $-1$, $0$ and $1$. Recall, 	if $\upsigma$ vanishes at $x_e\in\{-1,0,1\}$, then $\Prob^{x_e}(X_t=x_e,\ t\ge0)=1$. In particular, $\updelta_{x_e}$ is an invariant measure for $\process{X}$.
	
	\begin{proposition}
		If $\upsigma$ vanishes at $x_e\in\{-1,0,1\}$, then $\Prob^{x}(X_t>x_e,\ t\ge0)=1$ for all $x>x_e$, and $\Prob^{x}(X_t<x_e,\ t\ge0)=1$ for all $x<x_e$. 
		\end{proposition}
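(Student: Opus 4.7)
The plan is to establish the claim in two stages. First, a pathwise comparison with the constant equilibrium solution $X_t\equiv x_e$ would yield the weak one-sided inclusion. Then, an It\^o estimate on $\log|X_t-x_e|$ together with a stopping-time localisation would upgrade the inequality to a strict one by showing that the first hitting time of $x_e$ is almost surely infinite.

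By \textbf{(A1)} the coefficients $b(x)=-x^3+x$ and $\upsigma$ are locally Lipschitz, so equation \eqref{eq:langevin} enjoys pathwise uniqueness. Since $x_e\in\{-1,0,1\}$ is a zero of both $b$ and $\upsigma$, the constant path $X_t\equiv x_e$ is a strong solution starting from $x_e$. Pathwise uniqueness then rules out crossing: if $x>x_e$ and $X^x$ reached $x_e$ at some finite time $\tau$, the strong Markov property combined with uniqueness at $x_e$ would force $X^x_t=x_e$ for all $t\ge\tau$, preventing any excursion below $x_e$; by continuity, this yields $X^x_t\ge x_e$ for all $t\ge 0$ $\Prob^x$-a.s. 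The symmetric argument handles $x<x_e$.

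To make the inequality strict, I would fix $x>x_e$, set $Y_t:=X_t-x_e$, and for $n\in\N$ large and $M>|x|\vee|x_e|$ introduce
\[
\tau_n:=\inf\{t\ge 0:Y_t\le 1/n\},\qquad \tau^M:=\inf\{t\ge 0:|X_t|\ge M\};
\]
non-explosion gives $\tau^M\nearrow\infty$ a.s. Apply It\^o's formula to $f(x):=\log(x-x_e)$ up to $\tau_n\wedge\tau^M$. The operator $\mathcal{L}$ from \eqref{eq:operator} acts on $f$ as
\[
\mathcal{L}f(x)=\frac{-x^3+x}{x-x_e}-\frac{\upsigma(x)^2}{2(x-x_e)^2},
\]
and both summands are bounded on $\{|x|\le M\}\setminus\{x_e\}$: the first is a quadratic polynomial because $x_e$ is a root of $-x^3+x$, and \textbf{(A1)} with $\upsigma(x_e)=0$ furnishes $L_M>0$ with $|\upsigma(x)|\le L_M|x-x_e|$ on $[-M,M]$, bounding the second summand by $L_M^2/2$. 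Similarly $|\upsigma(X_s)/Y_s|\le L_M$ up to $\tau_n\wedge\tau^M$, so the It\^o stochastic integral is a genuine martingale.

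Taking expectations and writing $C_M$ for the $L^\infty$-bound on $|\mathcal{L}f|$ yields $\Exp^x[\log Y_{t\wedge\tau_n\wedge\tau^M}]\ge\log(x-x_e)-C_Mt$; on $\{\tau_n\le t\wedge\tau^M\}$ continuity forces $\log Y_{\tau_n}=-\log n$, while on the complement $\log Y\le\log(2M)$. Combining these gives $\Prob^x(\tau_n\le t\wedge\tau^M)\le(\log(2M)-\log(x-x_e)+C_Mt)/\log n\to 0$ as $n\to\infty$. Since $\tau_n$ increases to $\tau_0:=\inf\{t:Y_t=0\}$, this forces $\tau_0>t\wedge\tau^M$ $\Prob^x$-a.s.; letting $M\nearrow\infty$ and then $t\nearrow\infty$ yields $\tau_0=\infty$ almost surely, so $X_t>x_e$ for all $t\ge 0$. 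The case $x<x_e$ is handled symmetrically with $x_e-X_t$ in place of $Y_t$. The delicate step is securing boundedness of the singular ratio $\upsigma(x)^2/(x-x_e)^2$, which is precisely what \textbf{(A1)} combined with $\upsigma(x_e)=0$ guarantees.
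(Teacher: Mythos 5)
Your proof is correct and rests on the same core idea as the paper: a logarithmic Lyapunov function together with the local Lipschitz bound $|\upsigma(x)|\le L_M|x-x_e|$ (from \textbf{(A1)} and $\upsigma(x_e)=0$) to show the hitting time of $x_e$ is almost surely infinite. The organization differs in a few ways. The paper works with $\V(x)=x-1-\ln x$ on $(0,\infty)$, which is nonnegative and blows up at \emph{both} endpoints, so a single exit time $\uptau_n=\inf\{t:X_t\notin(1/n,n)\}$ suffices, and it closes the argument by contradiction, using \textbf{(A2)} to get $\mathcal{L}\V\le 0$ for large $x$. You instead use $f(x)=\log(x-x_e)$, which is singular only at $x_e$, and compensate by introducing the extra localization $\tau^M$ and invoking non-explosion (itself a consequence of \textbf{(A2)}); your limit $n\to\infty$ then gives a clean direct estimate $\Prob^x(\tau_n\le t\wedge\tau^M)\to0$ rather than a contradiction. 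These are cosmetic rather than substantive differences; both give the same conclusion with essentially the same effort. One small remark: your first paragraph (pathwise uniqueness plus the strong Markov property to rule out crossing) is dispensable, because once $\tau_0=\infty$ is established the strict inequality $X_t>x_e$ follows directly from continuity of paths; keeping it as motivation is fine, but it is not a load-bearing step.
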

\begin{proof}
	Let us discuss the case when $x_e=0$ and $x>0$. The oder five cases are treated in a similar way, simply by appropriately shifting and/or mirroring the function $\V$ defined below. 
	We follow the proof of  \cite[Lemma 1]{Liu-Wang-2011}.
	For $n\in\N$ define $\uptau_n:=\inf\{t\ge0:X_t\notin(1/n,n)\}$. Clearly, $\{\tau_n\}_{n\in\N}$ is a non-decreasing sequence of stopping times. Set $\uptau_\infty:=\lim_{n\nearrow\infty}\uptau_n.$ In the sequel we show that $\Prob^x(\tau_\infty=\infty)=1$ for all $x>0$, which automatically implies the assertion. Assume  this is not the case. Then there  exist $x_0>0$ and $0<\upvarepsilon<1$, such that   $\Prob^{x_0}(\uptau_\infty<\infty)>\upvarepsilon$. This automatically implies  that there are $n_0\in\N$ and $T>0$, such that $x_0\in(1/n,n)$ and $\Prob^{x_0}(\uptau_n<T)>\upvarepsilon$ for all $n\ge n_0$. Next, define $\V(x):=x-1-\ln x$. It is elementary to check that $\V:(0,\infty)\to\R_+$ and $\V\in \mathcal{C}^2(0,\infty)$. Also, for $n\in\N$ let $\V_n\in\mathcal{C}^2(\R)$ be such that $\V_n(x)\I_{(1/n,n)}(x)=\V(x).$ Now, by the  martingale property of $\{M^{\V_n}_{t\wedge\uptau_n}\}_{t\ge0}$ (defined in \eqref{eq:martingale}), we have that for all $n\in\N$,
	\ban
	&\Exp^{x_0}(\V_n(X_{T\wedge\uptau_n}))-\V_n(x_0)\\&= \Exp^{x_0}(\V(X_{T\wedge\uptau_n}))-\V(x_0)\\&=\Exp^{x_0}\left(\int_{0}^{T\wedge\uptau_n}\mathcal{L}\V(X_s)\D s\right)\\
	&=\Exp^{x_0}\left(\int_{0}^{T\wedge\uptau_n}\left(\left(-X_s^3+X_s\right)\left(1-X_s^{-1}\right)+\frac{1}{2}\upsigma^2(X_s)X_s^{-2}\right)\D s\right)\\
	&=\Exp^{x_0}\left(\int_{0}^{T\wedge\uptau_n}\left(-X_s^{3}+X^{2}_s+X_s-1+\frac{1}{2}\upsigma^2(X_s)X_s^{-2}\right)\D s\right).\ean
	According to (\textbf{A2}) there is $r_1>0$ such that $|\upsigma(x)|\le \sqrt{2}x^2$ for all $x\ge r_1.$ Next, take $r_2\ge r_1$ such that $-x^3+2x^2+x-1\le0$ for all $x\ge r_2$. Thus, \ben\sup_{x\ge r_2}\left(-x^{3}+x^{2}+x-1+\frac{1}{2}\upsigma^2(x)x^{-2}\right)\le 0.\een
	On the other hand, in order to bound the above term on $[0,r_2]$, we first observe that due to (\textbf{A1}), compactness of  $[0,r_2]$ and $\upsigma(0)=0$ there is $c>0$ such that $|\upsigma(x)|\le\sqrt{2c}|x|$ for all $x\in[0,r_2]$. Thus, 
	\ben\sup_{0\le x\le r_2}\left(-x^{3}+x^{2}+x-1+\frac{1}{2}\upsigma^2(x)x^{-2}\right)\le\sup_{0\le x\le r_2}\left(-x^{3}+x^{2}+x-1+c\right)\le c .\een
	We conclude now that \ben\Exp^{x_0}(\V_n(X_{T\wedge\uptau_n}))-\V_n(x_0)\le c\Exp^{x_0}(T\wedge\uptau_n)\le c T.\een Thus, for $n\ge n_0$ we have that \ben (\V(1/n)\wedge\V(n))\Prob^{x_0}(\uptau_n<T)\le\Exp^{x_0}(\V(X_{\uptau_n})\I_{\{\uptau_n< T\}})\le\Exp^{x_0}(\V_n(X_{T\wedge\uptau_n}))\le \V(x_0)+c T.\een Now, since $\Prob^{x_0}(\uptau_n<T)>\upvarepsilon$ for all $n\ge n_0$ and $\V(1/n)\wedge\V(n)\nearrow\infty$, as $n\nearrow\infty$, the assertion follows. 
\end{proof}

At the end,  we assume  $\upsigma$ has a root at $x_e\in\{-1,0,1\}$ and discuss stability of $\process{X}.$ 
We start with an auxiliary result. For $a\in\R$ and $0<r_1<r_2$ define $I_{r_1,r_2}(a):=\{x\in\R:r_1<|x-a|<r_2\}$.
\begin{lemma}\label{lm2.5} 
\begin{itemize}
	\item [(i)] If $x_e=0$, then for any $0<\upvarepsilon_1<\upvarepsilon_2<1$ we have that \ben\sup_{x\in I_{\upvarepsilon_1,\upvarepsilon_2}(x_e)}\Exp^x(\uptau_{I^c_{\upvarepsilon_1,\upvarepsilon_2}(x_e)})\le\frac{\upvarepsilon_2-\upvarepsilon_1}{(\upvarepsilon_1-\upvarepsilon_1^3)\wedge(\upvarepsilon_2-\upvarepsilon_2^3)}.\een
	
	\medskip
	
	\item[(ii)] If $x_e\in\{-1,1\}$, then for any $0<\upvarepsilon_1<\upvarepsilon_2<1$ we have that \ben\sup_{x\in I_{\upvarepsilon_1,\upvarepsilon_2}(x_e)}\Exp^x(\uptau_{I^c_{\upvarepsilon_1,\upvarepsilon_2}(x_e)})\le\frac{\upvarepsilon_2}{(\upvarepsilon_1-\upvarepsilon_1^3)\wedge(\upvarepsilon_2-\upvarepsilon_2^3)}.\een
\end{itemize}	
	\end{lemma}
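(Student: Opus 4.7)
The plan is a Dynkin/Lyapunov argument with the test function $\V(x):=|x-x_e|$, applied component-wise. Because $\upsigma(x_e)=0$ in both cases, the preceding proposition implies that a solution started in either of the two components $(x_e-\upvarepsilon_2,x_e-\upvarepsilon_1)$ and $(x_e+\upvarepsilon_1,x_e+\upvarepsilon_2)$ of $I_{\upvarepsilon_1,\upvarepsilon_2}(x_e)$ stays in that component up to the exit time $\uptau:=\uptau_{I^c_{\upvarepsilon_1,\upvarepsilon_2}(x_e)}$ and in particular never hits $x_e$. Consequently, after a harmless smooth modification of $|x-x_e|$ outside the closed annulus, we may take $\V\in\mathcal{C}^2(\R)$; on the annulus itself $\V'(x)=\mathrm{sgn}(x-x_e)$ and $\V''(x)=0$, so that $\mathcal{L}\V(x)=\mathrm{sgn}(x-x_e)(-x^3+x)$.

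The first step is to verify that $\mathcal{L}\V$ has a constant sign on each component and that $|\mathcal{L}\V(x)|\ge c:=(\upvarepsilon_1-\upvarepsilon_1^3)\wedge(\upvarepsilon_2-\upvarepsilon_2^3)$ uniformly on the annulus. In case (i) this is immediate: $|\mathcal{L}\V(x)|=|x|(1-x^2)=u-u^3$ with $u:=|x|\in[\upvarepsilon_1,\upvarepsilon_2]\subset(0,1)$, and since $u\mapsto u-u^3$ is concave and positive on $[0,1]$ with interior maximum at $1/\sqrt 3$, its minimum over $[\upvarepsilon_1,\upvarepsilon_2]$ is attained at an endpoint and equals $c$. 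In case (ii) the same program is carried out on each component, using the identity $u(1-u)(2-u)=(1-u)-(1-u)^3$ on the inner component (the one between $0$ and $x_e$) and the monotonicity of $u\mapsto u(1+u)(2+u)$ on the outer component.

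Next, with the standard localization $\uptau_n:=\inf\{t\ge0:|X_t|\ge n\}$, which increases to $\infty$ by non-explosiveness of $\process X$, Dynkin's formula applied to $\{M^\V_{t\wedge\uptau\wedge\uptau_n}\}_{t\ge0}$ via \eqref{eq:martingale} combined with the pointwise bound on $|\mathcal{L}\V|$ yields, after rearranging (with the sign convention determined by the sign of $\mathcal{L}\V$),
\[
c\,\Exp^x\bigl(t\wedge\uptau\wedge\uptau_n\bigr)\;\le\;\bigl|\V(x)-\Exp^x\bigl(\V(X_{t\wedge\uptau\wedge\uptau_n})\bigr)\bigr|.
\]
In case (i) both $\V(x)$ and $\V(X_{t\wedge\uptau\wedge\uptau_n})$ lie in $[\upvarepsilon_1,\upvarepsilon_2]$, so the right-hand side is at most $\upvarepsilon_2-\upvarepsilon_1$; in case (ii) I would use only the one-sided bound $0\le\V\le\upvarepsilon_2$, giving $\upvarepsilon_2$. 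Letting $t,n\nearrow\infty$ by monotone convergence and then taking the supremum over $x$ in the annulus delivers the two claimed inequalities.

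The main technical hurdle is the uniform lower bound $|\mathcal{L}\V|\ge c$ in case (ii). The outer component is easy, as $u(1+u)(2+u)$ is increasing in $u>0$, so its minimum over $[\upvarepsilon_1,\upvarepsilon_2]$ is $\upvarepsilon_1(1+\upvarepsilon_1)(2+\upvarepsilon_1)$, which is visibly $\ge\upvarepsilon_1-\upvarepsilon_1^3\ge c$. On the inner component, $|\mathcal{L}\V(x)|=h(1-u)$ with $h(y):=y-y^3$ and $u:=|x-x_e|$; depending on whether $1/\sqrt 3\in[1-\upvarepsilon_2,1-\upvarepsilon_1]$, the minimum of $h$ on this interval is attained at one or both endpoints, and verifying that it still dominates $c=h(\upvarepsilon_1)\wedge h(\upvarepsilon_2)$ reduces to an elementary but case-dependent analysis of $h$ near its global maximum at $1/\sqrt 3$.
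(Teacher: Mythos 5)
Your argument follows the same route as the paper: take $\V(x)=|x-x_e|$ (smoothed inside $I_{\upvarepsilon_1}(x_e)$), use the martingale $\{M^\V_{t\wedge\uptau}\}$ from \eqref{eq:martingale}, bound $|\mathcal{L}\V|$ below on the annulus by $c:=(\upvarepsilon_1-\upvarepsilon_1^3)\wedge(\upvarepsilon_2-\upvarepsilon_2^3)$, and let $t\nearrow\infty$. Part (i) and the outer component of part (ii) are correct and match the paper.

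However, the step you flag as a ``main technical hurdle'' --- namely that on the inner component of the annulus in case (ii) one has $h(1-u)\ge c$ for all $u\in(\upvarepsilon_1,\upvarepsilon_2)$, where $h(y)=y-y^3$ --- is not merely case-dependent: it is in general \emph{false}. Take $x_e=1$, $\upvarepsilon_1=0.6$, $\upvarepsilon_2=0.7$. Then $c=h(0.6)\wedge h(0.7)=0.384\wedge0.357=0.357$, while on the inner component $(0.3,0.4)$ the quantity $|\mathcal{L}\V(x)|=x-x^3$ dips as low as $h(0.3)=0.273<c$. So the inequality
\ben
-\Exp^x\left(\int_0^{t\wedge\uptau}{\rm sgn}(X_s-x_e)(-X_s^3+X_s)\,\D s\right)\ge c\,\Exp^x(t\wedge\uptau)
\ean
does not follow from a pointwise bound on the inner component, and the argument as written does not close. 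More structurally, $g(u):=u(1-u)(2-u)$ and $h(u):=u(1-u)(1+u)$ satisfy $g(u)-h(u)=u(1-u)(1-2u)$, so $g<h$ on $(1/2,1)$; whenever both $\upvarepsilon_1,\upvarepsilon_2>1/\sqrt 3$ one has $c=h(\upvarepsilon_2)>g(\upvarepsilon_2)$ and the desired lower bound fails at the inner endpoint. The only algebraic fact the paper invokes, $(1+r)^3-(1+r)>r-r^3$, covers the \emph{outer} component only; the paper's own proof is silent on the inner component, so your instinct to examine it was right, but the conclusion you reached (that it ``still dominates $c$'') is incorrect. To repair the argument one must either (a) replace $c$ by the true minimum of $|\mathcal L\V|$ over both components, $\min\{h(\upvarepsilon_1),h(\upvarepsilon_2),h(1-\upvarepsilon_1),h(1-\upvarepsilon_2)\}$, or (b) exploit $\V(X_{t\wedge\uptau})\ge\upvarepsilon_1$ so that the left-hand side is only $\upvarepsilon_2-\upvarepsilon_1$, and then reprove the stated constant (or accept a weaker one); neither step is ``elementary case analysis of $h$ near $1/\sqrt 3$.''

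One more minor point: you invoke the preceding proposition (i.e.\ $\upsigma(x_e)=0$) to keep the process in one component, but this is not needed for the Dynkin argument --- the annulus is at positive distance from $x_e$, $\V$ is $\mathcal C^2$ there after the modification, and the stopped martingale argument is insensitive to whether the process could in principle cross $x_e$ \emph{after} $\uptau$. The paper does not use it either.
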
 
	\begin{proof}
		 Let   $\V:\R\to\R_+$, $\V\in \mathcal{C}^2(\R),$ be such that $\V(x)=|x-x_e|\I_{I^c_{\upvarepsilon_1}(x_e)}(x)$.  Again, due to the martingale property of  $\{M^\V_{t\wedge\uptau_{I^c_{\upvarepsilon_1,\upvarepsilon_2}(x_e)}}\}_{t\ge0}$, we have
		\ben \Exp^x(\V(X_{t\wedge\uptau_{I^c_{\upvarepsilon_1,\upvarepsilon_2}(x_e)}}))-\V(x)=\Exp^x\left(\int_{0}^{t\wedge\uptau_{I^c_{\upvarepsilon_1,\upvarepsilon_2}(x_e)}}\mathcal{L}\V(X_s)\D s\right),\qquad t\ge0,\ x\in\R.\een 
	\begin{itemize}
		\item [(i)] If $x_e=0$, then for $x\in I_{\upvarepsilon_1,\upvarepsilon_2}(x_e)$ we have \ban \upvarepsilon_2-\upvarepsilon_1&\ge \Exp^x(\V(X_{t\wedge\uptau_{I^c_{\upvarepsilon_1,\upvarepsilon_2}(x_e)}}))-\V(x)
		\\&=
		\Exp^x\left(\int_{0}^{t\wedge\uptau_{I^c_{\upvarepsilon_1,\upvarepsilon_2}(x_e)}}{\rm sgn}\,(X_s)(-X_s^3+X_s)\D s\right)\\&\ge((\upvarepsilon_1-\upvarepsilon_1^3)\wedge(\upvarepsilon_2-\upvarepsilon_2^3))\Exp^x(t\wedge\uptau_{I^c_{\upvarepsilon_1,\upvarepsilon_2}(x_e)}),\qquad t\ge0,\ean where $sgn$ denotes the signum function. Finally, by letting $t\nearrow\infty$ the assertion follows. 
		
		\medskip
		
		\item [(ii)] 
		If $x_e\in\{-1,1\}$, then for $x\in I_{\upvarepsilon_1,\upvarepsilon_2}(x_e)$ we have \ban \upvarepsilon_2&\ge\V(x)\\
		&\ge
		-\Exp^x\left(\int_{0}^{t\wedge\uptau_{I^c_{\upvarepsilon_1,\upvarepsilon_2}(x_e)}}{\rm sgn}\,(X_s-x_e)(-X_s^3+X_s)\D s\right)\\&\ge((\upvarepsilon_1-\upvarepsilon_1^3)\wedge(\upvarepsilon_2-\upvarepsilon_2^3))\Exp^x(t\wedge\uptau_{I^c_{\upvarepsilon_1,\upvarepsilon_2}(x_e)}),\ean 
		where in the last line we used the fact that \ben(1+r)^3-(1+r)>r-r^3,\qquad 0<r<1.\een
Finally, by letting $t\nearrow\infty$ the desired result follows.	
	\end{itemize}

		\end{proof}
		
		Now, we are ready to prove  Theorem \ref{tm1.1}.

	\begin{proof}[Proof of Theorem \ref{tm1.1}]
		\begin{itemize}
			\item [(i)] According to  Lemma \ref{lm2.5} and \cite[Theorem 5.5]{Khasminskii-Book-2012} it suffices to show that there are $\upvarepsilon>0$ and non-negative $\V\in\mathcal{C}^2(\R\setminus\{0\})$, such that $\lim_{|x|\searrow0}\V(x)=\infty$ and $\mathcal{L}\V(x)\le0$ for $0<|x|<\upvarepsilon$.	
			By assumption, there is $0<\upvarepsilon_0<1$  such that \ben\upkappa_{\upvarepsilon}:=\inf\left\{\upkappa:\frac{|\upsigma(x)|}{|x|}\le\upkappa,\ 0<|x|<\upvarepsilon\right\}<\sqrt{2}\een for every $0<\upvarepsilon\le\upvarepsilon_0.$ Observe that this is well defined due to (\textbf{A1}). Thus, there is $0<\upvarepsilon_1\le\upvarepsilon_0$ such that for any $0<\upvarepsilon\le\upvarepsilon_1$,
			$\upkappa_{\upvarepsilon}^2+2\upvarepsilon^2<\upkappa^2_{\upvarepsilon}+2-\upkappa^2_{\upvarepsilon_0}\le2.$ In particular, $\upkappa^2_{\upvarepsilon}<2(1-\upvarepsilon^2)$ for $0<\upvarepsilon\le\upvarepsilon_1.$ Now, fix $0<\upvarepsilon\le\upvarepsilon_1$, and $\upalpha>0$ such that \ben\frac{1}{\upalpha+1}\ge\frac{\upkappa^2_{\upvarepsilon}}{2(1-\upvarepsilon^2)}.\een
			Further, define $\V(x):=|x|^{-\alpha}.$ For $0<|x|<\upvarepsilon$ we have that 
			\ban \mathcal{L}\V(x)&=-\upalpha(-x^3+x)|x|^{-\upalpha-1}{\rm sgn}\,(x)+\frac{\upalpha(\upalpha+1)}{2}\upsigma^2(x)|x|^{-\upalpha-2}\\
			&\le-\upalpha(-x^3+x)|x|^{-\upalpha-1}{\rm sgn}\,(x)+\frac{\upalpha(\upalpha+1)}{2}\upkappa^2_{\upvarepsilon}|x|^{-\upalpha}.\ean If $\upkappa_{\upvarepsilon}=0$, then $\mathcal{L}\V(x)\le0$ for $0<|x|<\upvarepsilon$, and if $\upkappa_{\upvarepsilon}>0$, then
			\ben\mathcal{L}\V(x)\le\upalpha|x|^{-\upalpha+2}-\upalpha|x|^{-\upalpha}+\upalpha(1-\upvarepsilon^2)|x|^{-\upalpha}=\upalpha|x|^{-\upalpha+2}-\upalpha\upvarepsilon^2|x|^{-\upalpha}<0\een for $0<|x|<\upvarepsilon$, which proves the assertion.
			
			\medskip
			
			\item[(ii)]  We use the same strategy as in (i). Let $\V(x):=\ln\ln\left(1/|x|+\e\right).$ For $0<|x|<\updelta$, we have that \ban\mathcal{L}\V(x)&=\frac{-(-x^3+x){\rm sgn }\,x}{(|x|+\e|x|^2)\ln\left(1/|x|+\e\right)}+\frac{-1+\ln(1/|x|+\e)(1+2\e |x|)}{(1+\e |x|)^2\ln^2(1/|x|+\e)}\\
			&=\frac{(1+\e|x|)(|x|^2-1)\ln(1/|x|+\e)-1+\ln(1/|x|+\e)(1+2\e |x|)}{(1+\e|x|)^2\ln^2\left(1/|x|+\e\right)}\\
			&=\frac{|x|^2(1+\e|x|)\ln(1/|x|+\e)+\e|x|\ln(1/|x|+\e)-1}{(1+\e|x|)^2\ln^2\left(1/|x|+\e\right)}.
			\ean Now, by observing that \ben\lim_{|x|\searrow0}|x|\ln\left(\frac{1}{|x|}+\e\right)=0,\een we conclude that there is $0<\upvarepsilon<\updelta$ such that $\mathcal{L}\V(x)\le0$ for $0<|x|<\upvarepsilon.$
			
			\medskip
			
			\item[(iii)] According to  Lemma \ref{lm2.5} and \cite[Theorem 5.6]{Khasminskii-Book-2012} it suffices to show that there are $\upvarepsilon>0$ and non-negative $\V\in\mathcal{C}^2(\R\setminus\{0\})$, such that $\V(x)>0$ for $x\in \R\setminus\{0\}$, $\lim_{|x|\searrow0}\V(x)=0$ and  $\mathcal{L}\V(x)\le0$ for $0<|x|<\upvarepsilon$.  By assumption, there is $\upvarepsilon_0>0$  such that \ben\upkappa_{\upvarepsilon}:=\sup\left\{\upkappa:\frac{|\upsigma(x)|}{|x|}\ge\upkappa,\ 0<|x|<\upvarepsilon\right\}>\sqrt{2}\een for every $0<\upvarepsilon\le\upvarepsilon_0.$  Now, fix $0<\upvarepsilon\le\upvarepsilon_0$, and $\upalpha>0$ such that \ben0<\upalpha\le 1-\frac{2}{\upkappa^2_\upvarepsilon}.\een
			Further, define $\V(x):=|x|^{\alpha}.$ For $0<|x|<\upvarepsilon$ we have that 
			\ban \mathcal{L}\V(x)&=\upalpha(-x^3+x)|x|^{\upalpha-1}{\rm sgn}\,(x)+\frac{\upalpha(\upalpha-1)}{2}\upsigma^2(x)|x|^{\upalpha-2}\\
			&\le-\upalpha|x|^{\upalpha+2}+\upalpha|x|^{\upalpha}+\frac{\upalpha(\upalpha-1)}{2}\upkappa^2_{\upvarepsilon}|x|^{\upalpha}\\&=-\upalpha|x|^{-\upalpha+2},\ean  which proves the desired result.

			\medskip
			
			\item[(iv)] Define $\V(x):=|x-x_e|$. Then, $\V\in\mathcal{C}^2(\R\setminus\{x_e\})$, $\V(x)>0$ for $|x-x_e|>0$, $\lim_{|x-x_e|\searrow0}\V(x)=0$ and  \ben\mathcal{L}\V(x)=(-x^3+x){\rm sgn}\,(x-x_e)\le0,\qquad 0<|x-x_e|<1.\een Thus, the first assertion follows from Lemma \ref{lm2.5} and \cite[Theorem 5.6]{Khasminskii-Book-2012}. 
			
			To prove the second assertion, according to \cite[Theorems 5.11 and 5.15]{Khasminskii-Book-2012}, it suffices to show that for $\V(x):=|x-x_e|^\upalpha$, $\upalpha>0$,  we have \ben \mathcal{L}\V(x)\le-c\V(x),\qquad x\in\R,\ xx_e>1.\een We have that
			\ban\mathcal{L}\V(x)&=\upalpha(-x^3+x)|x-x_e|^{\upalpha-1}{\rm sgn}(x-x_e)+\upalpha(\upalpha-1)\frac{\upsigma(x)^2}{2}|x-x_e|^{\upalpha-2}\\
			&=-\upalpha|x-x_e|^\upalpha\left(x(x+x_e)-(\upalpha-1)\frac{\upsigma(x)^2}{2}|x-x_e|^{-2}\right)\\
			&\le -c|x-x_e|^\upalpha\\
			&=-c\V(x),\qquad x\in\R,\ xx_e>1.\ean
				\end{itemize}
		\end{proof}

		\begin{remark}
			\begin{itemize}
				\item [(i)] If $0<\upalpha\le1$, then the constant $c$ in Theorem \ref{tm1.1} (iv) satisfies \ben 2+(1-\upalpha)\frac{\upkappa^2}{2}\ge \frac{c}{\upalpha}\ge 2,\een where \ben\upkappa:=\inf_{\upvarepsilon>0}\inf\left\{\upvarrho:\frac{|\upsigma(x)|}{|x-x_e|}\le\upvarrho,\ 0<|x-x_e|<\upvarepsilon\right\}.\een
				
					\medskip
				
				\item[(ii)] Assume $1<\upalpha\le2$, and let 
				\ben r:=\inf\{\uprho\ge2:\upsigma(x)^{2}\le2|x-x_e|^2x^2 \ \text{for}\ xx_e\ge\uprho \},\een which is finite according to (\textbf{A2}). Now, we have 
				\ban &\inf_{xx_e\ge r}\left(x(x+x_e)-(\upalpha-1)\frac{\upsigma(x)^2}{2}|x-x_e|^{-2}\right)\\&\ge\inf_{xx_e\ge r}\left((2-\upalpha)x^2+xx_e\right)\\&=(2-\upalpha)r^2+r. \ean
				Further, let \ben\upbeta:=\inf\{\upgamma\ge0:|\upsigma(x)|\le\upgamma|x-x_e|\ \text{for}\ 1\le xx_e\le r\},\een which is finite due to (\textbf{A1}), $\upsigma(x_e)=0$ and compactness of $[1,r]$, and assume that $\upbeta<2/\sqrt{\upalpha-1}$. Hence, \ban &\inf_{1\le xx_e\le r}\left(x(x+x_e)-(\upalpha-1)\frac{\upsigma(x)^2}{2}|x-x_e|^{-2}\right)\\&\ge\inf_{1\le xx_e\le r}\left(x(x+x_e)-(\upalpha-1)\frac{\upbeta^2}{2}\right)\\&=2 -(\upalpha-1)\frac{\upbeta^2}{2}.\ean Thus, \ben 2\ge \frac{c}{\upalpha}\ge2 -(\upalpha-1)\frac{\upbeta^2}{2}.\een
				
				\medskip
				
				\item[(iii)] Assume that $\upalpha>2$ and \ben\limsup_{|x|\nearrow\infty}|\upsigma(x)|/|x|^2<\sqrt{2}/\sqrt{\upalpha-1}.\een Let \ben r:=\inf\{\uprho\ge2:|\upsigma(x)|\le(\sqrt{2}/\sqrt{\upalpha-1})|x-x_e|x \ \text{for}\ xx_e\ge\uprho \},\een which is finite by assumption. Thus,
				\ben \inf_{xx_e\ge r}\left(x(x+x_e)-(\upalpha-1)\frac{\upsigma(x)^2}{2}|x-x_e|^{-2}\right)\ge r. \een Further, let $\upbeta$ be as in (ii). Then again 
			\ben \inf_{1\le xx_e\le r}\left(x(x+x_e)-(\upalpha-1)\frac{\upsigma(x)^2}{2}|x-x_e|^{-2}\right)\ge2 -(\upalpha-1)\frac{\upbeta^2}{2},\een which implies \ben 2\ge \frac{c}{\upalpha}\ge 2 -(\upalpha-1)\frac{\upbeta^2}{2}.\een
			\end{itemize}
			\end{remark}

\section{Stability of general diffusion processes}\label{s3}
We start with  proofs of Theorem \ref{tm3.1} and Corollary \ref{c1.1}.

	 	\begin{proof}[Proof of Theorem \ref{tm3.1}]  Observe first that $\Prob^x(X_t\neq x_e,\ t\ge0)=1$ for all $|x-x_e|>0$ (see \cite[Lemma 5.3]{Khasminskii-Book-2012}). 
	 			Next, for $n\in\N$ define $\uptau_n:=\inf\{t\ge0:|X_t|\ge n\}.$ 
	 			Clearly, under the  assumptions of the theorem, the process $\process{M^f}$ (defined in \eqref{eq:martingale}) is a local martingale for every $f\in\mathcal{C}^2(\R^d)$.
	 			Now, define  $f:[0,\infty)\times\R^d\setminus\{x_e\}\to[0,\infty)$ by \ben f(t,x):=\Phi_c^{-1}(t+\Phi_c\circ\V(x)).\een Clearly, $f$ is continuously differentiable with respect to the first variable on $(0,\infty)$, and twice continuously differentiable with respect to the second variable on $\R^d\setminus\{x_e\}.$ Next, note that the process \be\left\{\V(X_{t\wedge\uptau_n})+c\int_0^{t\wedge\uptau_n}\upvarphi\circ\V(X_s)\D s\right\}_{t\ge0}\ee is a supermartingale for any $n\in\N$. Indeed, for $t\ge s\ge0$, $|x-x_e|>0$ (for $x=x_e$ the assertion is obvious) and $n\in\N$, we have that
	 		\ban &\Exp^x\left(\V(X_{t\wedge\uptau_n})+c\int_0^{t\wedge\uptau_n}\upvarphi\circ\V(X_u)\D u\Big|\mathcal{F}_s\right)\\&=
	 		\Exp^x\left(\V(X_{t\wedge\uptau_n})+c\int_0^{t\wedge\uptau_n}\left(\upvarphi\circ\V(X_u)+\frac{\mathcal{L}\V(X_u)}{c}-\frac{\mathcal{L}\V(X_u)}{c}\right)\D u\Big|\mathcal{F}_s\right)\\&=
	 		\V(X_{s\wedge\uptau_n})-\int_0^{s\wedge\uptau_n}\mathcal{L}\V(X_u)\D u	+ c\int_0^{s\wedge\uptau_n}\left(\upvarphi\circ\V(X_u)+\frac{\mathcal{L}\V(X_u)}{c}\right)\D u	\\
	 		&\ \ \ +c\Exp^x\left(\int_{s\wedge\uptau_n}^{t\wedge\uptau_n}\left(\upvarphi\circ\V(X_u)+\frac{\mathcal{L}\V(X_u)}{c}\right)\D u	\Big|\mathcal{F}_s\right)\\
	 		&\le \V(X_{s\wedge\uptau_n})+c\int_0^{s\wedge\uptau_n}\upvarphi\circ\V(X_u)\D u.
	 		\ean
	 		Now, by using this fact, \cite[Corollary 4.5]{Hairer-Lecture-notes-2016} states that the process $\left\{f(s+t\wedge\uptau_n,X_{t\wedge\uptau_n}) \right\}_{t\ge0}$ is also supermartingale for any $s\ge0$ and $n\in\N$. 
	 		 In particular \ben \Exp^x\left( f(t\wedge\uptau_n,X_{t\wedge\uptau_n})\right)\le f(0,x)=\V(x),\qquad t\ge0,\ |x-x_e|>0.\een Consequently, by employing Fatou's lemma and conservativeness of $\process{X}$, $\{f(t,X_t)\}_{t\ge0}$ is also is a supermartingale. Furthermore, since it is positive, it converges $\Prob^x$-a.s. for all $|x-x_e|>0$. Consequently,  
	 		\ben \sup_{t\ge 0}f(t,X_t)\le Y_x\qquad \Prob^x\text{-a.s.,}\een
	 	where $Y_x$ is a strictly positive $\Prob^x$-finite random variable. Finally, we conclude that
	 	\ben \V(X_t)=|X_t-x_e|^\upalpha\le \Phi_c^{-1}(\Phi_c(Y_x)-t),\qquad\Prob^x\text{-a.s.},\ x\in\R^d,\ t\ge0.\een	 		
	 		\end{proof}
	 		
\begin{proof}[Proof of Corollary \ref{c1.1}]
	\begin{itemize}
		\item [(i)] 	Fix $0<\upvarepsilon<r$ and take concave and continuously differentiable function $\upvarphi$ such that 	\ben\upvarphi(t)=\left\{\begin{array}{cc}
			t, & 0\le t\le r-\upvarepsilon \\
			r,&
			t\ge r+\upvarepsilon 
		\end{array}\right.\le \left\{\begin{array}{cc}
		t, & 0\le t\le r \\
		r,&
		t\ge r. 
	\end{array}\right.\een The assertion now follows from Theorem \ref{tm3.1} and by observing that \ben\lim_{t\nearrow\infty}\frac{\ln\Phi^{-1}_c(\Phi_c(Y_x)-t)}{t}=-c,\qquad \Prob^x\text{-a.s.},\ x\in\R^d.\een

	\item[(ii)] Fix $0<\upvarepsilon<r_{\upbeta}$ and take concave and continuously differentiable function $\upvarphi$ such that $\upvarphi(t)\le\upvarphi_{\upbeta}(t)$ for $t>0$, and	$\upvarphi(t)=\upvarphi_{\upbeta}(t)$ for $t\in(r_\upbeta-\upvarepsilon,r_\upbeta+\upvarepsilon)^c.$ Again, the assertion  follows from Theorem \ref{tm3.1} and by observing that \ben\lim_{t\nearrow\infty}\frac{\ln\Phi^{-1}_c(\Phi_c(Y_x)-t)}{t^\upbeta}=-c^\upbeta,\qquad \Prob^x\text{-a.s.},\ x\in\R^d.\een
	\end{itemize}	 			 
\end{proof}	 
	
At the end we also conclude the following.

\begin{corollary}\label{c3.3}
	Assume the conditions of Theorem \ref{tm3.1}, 
and assume there are  $c>0$, $r>0$ and $0<\upgamma<1$, such that 	\ben\mathcal{L}\V(x)\le \left\{\begin{array}{cc}
	-c(\V(x))^\upgamma, &  0<\V(x)\le r \\
	-cr^\upgamma,&
	\V(x)\ge r.
\end{array}\right. \een  Then,
 \ben\limsup_{t\nearrow\infty}\frac{\ln |X_t-x_e|}{t^\upbeta}\le-\frac{c^\upbeta}{\upalpha}\een for any $\upbeta\ge1$ and $x\in\R^d.$ 
\end{corollary}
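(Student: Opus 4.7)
The plan is to reduce Corollary \ref{c3.3} to Corollary \ref{c1.1}(ii) by choosing its free parameter $r_{\upbeta}$ appropriately. First I fix $\upbeta>1$; the case $\upbeta=1$ then follows automatically, since any bound of the form $\limsup_{t\nearrow\infty}\ln|X_t-x_e|/t^{\upbeta_0}\le-c^{\upbeta_0}/\upalpha$ with some $\upbeta_0>1$ forces $\ln|X_t-x_e|/t\to-\infty$, hence a fortiori gives the assertion for $\upbeta=1$.

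The key calculus fact is that for $0<\upgamma<1$ and $\upbeta>1$,
\ben
\frac{t^{\upgamma}}{\upvarphi_{\upbeta}(t)}\;=\;\frac{t^{\upgamma-1}}{\upbeta(-\ln t)^{1-1/\upbeta}}\;\longrightarrow\;+\infty\qquad\text{as }t\searrow 0,
\een
because the polynomial blow-up $t^{\upgamma-1}$ outpaces the logarithmic factor $(-\ln t)^{1-1/\upbeta}$. I can therefore choose $r_{\upbeta}\in(0,\min(r,\e^{1/\upbeta-1})]$ small enough that $t^{\upgamma}\ge\upbeta t(-\ln t)^{1-1/\upbeta}$ for every $t\in(0,r_{\upbeta}]$; with this choice the function $\upvarphi_{\upbeta}$ from Corollary \ref{c1.1}(ii) is well-defined and is pointwise dominated by $t\mapsto t^{\upgamma}$ on $(0,r_{\upbeta}]$ (in particular at $t=r_{\upbeta}$, giving $r_{\upbeta}^{\upgamma}\ge\upbeta r_{\upbeta}(-\ln r_{\upbeta})^{1-1/\upbeta}$).

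Next I verify the hypothesis $\mathcal{L}\V(x)\le -c\,\upvarphi_{\upbeta}\circ\V(x)$ of Corollary \ref{c1.1}(ii) for every $|x-x_e|>0$, splitting into three regions. If $0<\V(x)\le r_{\upbeta}$, the standing hypothesis together with the choice of $r_{\upbeta}$ yield $\mathcal{L}\V(x)\le -c\V(x)^{\upgamma}\le -c\upvarphi_{\upbeta}(\V(x))$. If $r_{\upbeta}\le\V(x)\le r$, monotonicity of $t\mapsto t^{\upgamma}$ and the boundary inequality give
\ben
\mathcal{L}\V(x)\le -c\V(x)^{\upgamma}\le -c r_{\upbeta}^{\upgamma}\le -c\upbeta r_{\upbeta}(-\ln r_{\upbeta})^{1-1/\upbeta}=-c\upvarphi_{\upbeta}(\V(x)).
\een
If $\V(x)\ge r$, then $\mathcal{L}\V(x)\le -c r^{\upgamma}\le -c r_{\upbeta}^{\upgamma}\le -c\upvarphi_{\upbeta}(\V(x))$ by the same boundary inequality. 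Corollary \ref{c1.1}(ii) applied to this $r_{\upbeta}$ then delivers $\limsup_{t\nearrow\infty}\ln|X_t-x_e|/t^{\upbeta}\le -c^{\upbeta}/\upalpha$, which is the desired conclusion.

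I do not expect any serious obstacle; the only step requiring care is the simultaneous smallness of $r_{\upbeta}$, which must satisfy both $r_{\upbeta}\le\e^{1/\upbeta-1}$ (so that $\upvarphi_{\upbeta}$ is non-decreasing in the sense of Corollary \ref{c1.1}(ii)) and the dominance $t^{\upgamma}\ge\upvarphi_{\upbeta}(t)$ on $(0,r_{\upbeta}]$. Because the hypothesis $\mathcal{L}\V\le -c\V^{\upgamma}$ with $\upgamma<1$ is qualitatively stronger near $\V=0$ than any $\upvarphi_{\upbeta}$-type bound, such an $r_{\upbeta}$ always exists, so the reduction goes through for every $\upbeta>1$ (and hence every $\upbeta\ge 1$) as claimed.
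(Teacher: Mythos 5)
Your argument is correct and uses the same key fact as the paper's very terse proof, namely that $t^{\upgamma}\ge\upvarphi_{\upbeta}(t)$ for $t$ small, reducing the claim to Corollary \ref{c1.1}(ii). The only (cosmetic) difference is that you handle $\upbeta=1$ by noting it follows a fortiori from the $\upbeta>1$ case, whereas the paper also records $t^{\upgamma}\ge t$ for small $t$ and invokes Corollary \ref{c1.1}(i); your explicit three-region check simply fills in details the paper leaves implicit.
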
	
\begin{proof}
	The assertion follows from the fact that  \ben t^\upgamma\ge t  \qquad\text{and}\qquad t^\upgamma\ge \upvarphi_\upbeta(t)\een for all $t>0$ small enough.
\end{proof}

\begin{proposition}\label{p3.4} In the one-dimensional case, the condition in Corollary \ref{c3.3}   will hold if there are $c$, $r>0$ and $0<\upgamma<1$, such that 
	
	\medskip
	 
	\begin{itemize}
		\item [(i)] 	${\rm sgn}(x-x_e)b(x)\le -c|x-x_e|^\upgamma$ for $|x-x_e|\le r$;
	
	\medskip
	
	\item[(ii)] $\sup_{|x-x_e|\ge r}{\rm sgn}(x-x_e)b(x)\le -c r^\upgamma.$
	\end{itemize}  
\end{proposition}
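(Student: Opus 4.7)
The strategy is to take $\upalpha = 1$ in the family $\V(x) := |x-x_e|^{\upalpha}$ appearing in Theorem \ref{tm3.1} and Corollary \ref{c3.3}. With this choice, $\V(x)=|x-x_e|$ belongs to $\mathcal{C}^2(\R\setminus\{x_e\})$ and satisfies $\V'(x)={\rm sgn}(x-x_e)$, $\V''(x)=0$ for all $x\neq x_e$. Hence the second-order part of $\mathcal{L}f(x)=b(x)f'(x)+\tfrac{1}{2}\upsigma(x)^{2}f''(x)$ annihilates $\V$, leaving
\ben
\mathcal{L}\V(x) = b(x)\,{\rm sgn}(x-x_e),\qquad x\in\R\setminus\{x_e\}.
\een

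Under this identification, the dichotomy $\{\V(x)\le r\}$ versus $\{\V(x)\ge r\}$ is exactly $\{|x-x_e|\le r\}$ versus $\{|x-x_e|\ge r\}$, and $\V(x)^{\upgamma}=|x-x_e|^{\upgamma}$. Therefore hypothesis (i), namely ${\rm sgn}(x-x_e)b(x)\le -c|x-x_e|^{\upgamma}$ for $|x-x_e|\le r$, becomes $\mathcal{L}\V(x)\le -c\,\V(x)^{\upgamma}$ on the first branch; and hypothesis (ii), $\sup_{|x-x_e|\ge r}{\rm sgn}(x-x_e)b(x)\le -cr^{\upgamma}$, yields $\mathcal{L}\V(x)\le -cr^{\upgamma}$ on the second branch. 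These are precisely the two bounds required by Corollary \ref{c3.3}.

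The key observation making this work is that taking $\upalpha=1$ removes the diffusion term entirely, which is also why Proposition \ref{p3.4} imposes no assumption on $\upsigma$. Consequently there is no real obstacle: the proof reduces to a direct computation of $\mathcal{L}\V$ followed by a line-by-line matching of the two inequalities, making the argument essentially mechanical.
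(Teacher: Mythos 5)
Your proof is correct and follows essentially the same route as the paper: both take $\V(x)=|x-x_e|$ (i.e.\ $\upalpha=1$), observe that $\V''$ vanishes off $x_e$ so $\mathcal{L}\V(x)={\rm sgn}(x-x_e)b(x)$, and then match the two hypotheses to the two branches required by Corollary \ref{c3.3}. The only difference is that you make explicit the remark that $\upalpha=1$ kills the diffusion term and hence explains the absence of any hypothesis on $\upsigma$, which the paper leaves implicit.
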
		
\begin{proof}
Take $\V(x):=|x-x_e|.$ Then, for $|x-x_e|>0$, we have that
\ben\mathcal{L}V(x)={\rm sgn}(x-x_e)b(x)\le \left\{\begin{array}{cc}
	-c(\V(x))^{\upgamma}, &  0<\V(x)\le r \\
	-c r^\upgamma,&
	\V(x)\ge r.
\end{array}\right.\een
Observe that in Proposition \ref{p3.4} we deal with diffusion processes with H\"older continuous coefficients. For existence, uniqueness and structural properties of such processes see  \cite{Fang-Zhang-2005}, \cite{Lan-Wu-2014} and  \cite{Xi-Zhu-2017}. 

\end{proof}	 		
\section*{Acknowledgement}Financial support through the Croatian Science Foundation (under Project 8958)   is gratefully acknowledged.

\bibliographystyle{alpha}
\bibliography{References}

\def\cprime{$'$}
\begin{thebibliography}{ABW10}

\bibitem[Abu00]{Abundo-2000}
M.~Abundo.
\newblock On first-crossing times of one-dimensional diffusions over two
  time-dependent boundaries.
\newblock {\em Stochastic Anal. Appl.}, 18(2):179--200, 2000.

\bibitem[ABW10]{Albeverio-Brzezniak-Wu-2010}
S.~Albeverio, Z.~Brze\'zniak, and J-L. Wu.
\newblock Existence of global solutions and invariant measures for stochastic
  differential equations driven by {P}oisson type noise with non-{L}ipschitz
  coefficients.
\newblock {\em J. Math. Anal. Appl.}, 371(1):309--322, 2010.

\bibitem[Col79]{Coleman-Book-1979}
S.~Coleman.
\newblock The uses of instantons.
\newblock In A.~Zichichi, editor, {\em The Whys of Subnuclear Physics}, volume
  vol 15 of {\em The Subnuclear Series}. Springer, Boston, MA, 1979.

\bibitem[DT98]{Drozdov-Talkner-1998}
A.~N. Drozdov and P~Talkner.
\newblock Path integrals for {F}okker-{P}lanck dynamics with singular
  diffusion: {A}ccurate factorization for the time evolution operator.
\newblock {\em J. Chem. Phys.}, 109(6):2080--2091, 1998.

\bibitem[Dur96]{Durrett-Book-1996}
R.~Durrett.
\newblock {\em Stochastic calculus}.
\newblock CRC Press, Boca Raton, FL, 1996.

\bibitem[FZ05]{Fang-Zhang-2005}
S.~Fang and T.~Zhang.
\newblock A study of a class of stochastic differential equations with
  non-{L}ipschitzian coefficients.
\newblock {\em Probab. Theory Related Fields}, 132(3):356--390, 2005.

\bibitem[Hai16]{Hairer-Lecture-notes-2016}
M.~Hairer.
\newblock {\em Convergence of {M}arkov processes}.
\newblock Lecture notes, University of Warwick. Available at
  http://www.hairer.org/notes/Convergence.pdf, 2016.

\bibitem[Kha12]{Khasminskii-Book-2012}
R.~Khasminskii.
\newblock {\em Stochastic stability of differential equations}.
\newblock Springer, Heidelberg, second edition, 2012.

\bibitem[Leh70]{Lehn-Book-1970}
J.~M. Lehn.
\newblock Nitrogen inversion.
\newblock In {\em Dynamic Stereochemistry}, volume 15/3 of {\em Fortschritte
  der Chemischen Forschung}. Springer, Berlin, Heidelberg, 1970.

\bibitem[LMK92]{Liang-Muller-Kirsten-1992}
J-Q. Liang and H.~J.~W. M\"uller-Kirsten.
\newblock Periodic instantons and quantum-mechanical tunneling at high energy.
\newblock {\em Phys. Rev. D}, 46:4685--4690, 1992.

\bibitem[LW11]{Liu-Wang-2011}
M.~Liu and K.~Wang.
\newblock Persistence and extinction in stochastic non-autonomous logistic
  systems.
\newblock {\em J. Math. Anal. Appl.}, 375(2):443--457, 2011.

\bibitem[LW14]{Lan-Wu-2014}
G.~Lan and J.-L. Wu.
\newblock New sufficient conditions of existence, moment estimations and non
  confluence for {SDE}s with non-{L}ipschitzian coefficients.
\newblock {\em Stochastic Process. Appl.}, 124(12):4030--4049, 2014.

\bibitem[MGL87]{Masoliver-Garrido-Llosa-1987}
J.~Masoliver, L.~Garrido, and J.~Llosa.
\newblock Geometrical derivation of the intrinsic {F}okker-{P}lanck equation
  and its stationary distribution.
\newblock {\em J. Statist. Phys.}, 46(1-2):233--248, 1987.

\bibitem[MT93a]{Meyn-Tweedie-AdvAP-II-1993}
S.~P. Meyn and R.~L. Tweedie.
\newblock Stability of {M}arkovian processes. {II}. {C}ontinuous-time processes
  and sampled chains.
\newblock {\em Adv. in Appl. Probab.}, 25(3):487--517, 1993.

\bibitem[MT93b]{Meyn-Tweedie-AdvAP-III-1993}
S.~P. Meyn and R.~L. Tweedie.
\newblock Stability of {M}arkovian processes. {III}. {F}oster-{L}yapunov
  criteria for continuous-time processes.
\newblock {\em Adv. in Appl. Probab.}, 25(3):518--548, 1993.

\bibitem[Nel67]{Nelson-Book-1967}
E.~Nelson.
\newblock {\em Dynamical theories of {B}rownian motion}.
\newblock Princeton University Press, Princeton, N.J., 1967.

\bibitem[PR07]{Prevot-Rockner-Book-2007}
C.~Pr\'ev\^ot and M.~R\"ockner.
\newblock {\em A concise course on stochastic partial differential equations}.
\newblock Springer, Berlin, 2007.

\bibitem[Twe94]{Tweedie-1994}
R.~L. Tweedie.
\newblock Topological conditions enabling use of {H}arris methods in discrete
  and continuous time.
\newblock {\em Acta Appl. Math.}, 34(1-2):175--188, 1994.

\bibitem[XZ17]{Xi-Zhu-2017}
F.~Xi and C.~Zhu.
\newblock Jump type stochastic differential equations with non-{L}ipschitz
  coefficients: non confluence, {F}eller and strong {F}eller properties, and
  exponential ergodicity.
\newblock {\em Preprint. Available at https://arxiv.org/abs/1706.01393v3},
  2017.

\end{thebibliography}

\end{document}